\numberwithin{equation}{section}
\let\cal\mathcal
\def\Ascr{{\cal A}}
\def\Bscr{{\cal B}}
\def\Dscr{{\cal D}}
\def\Escr{{\cal E}}
\def\Mscr{{\cal M}}
\def\Nscr{{\cal N}}
\def\Oscr{{\cal O}}
\def\Pscr{{\cal P}}
\def\Qscr{{\cal Q}}
\def\Rscr{{\cal R}}
\def\Sscr{{\cal S}}
\def\Zscr{{\cal Z}}
\let\blb\mathbb
\def \PP{{\blb P}}
\def \ZZ{{\blb Z}}
\def \NN{{\blb N}}
\def\Id{\operatorname{id}}
\def\pr{\mathop{\text{pr}}\nolimits}
\def\Lotimes{\overset{L}{\otimes}}
\def\mod{\operatorname{mod}}
\def\Gr{\operatorname{Gr}}
\def\coh{\mathop{\text{\upshape{coh}}}}
\def\Spec{\operatorname {Spec}}
\def\GL{\operatorname {GL}}
\def\Ext{\operatorname {Ext}}
\def\Hom{\operatorname {Hom}}
\def\uHom{\operatorname {\mathcal{H}\mathit{om}}}
\def\End{\operatorname {End}}
\def\RHom{\operatorname {RHom}}
\def\im{\operatorname {im}}
\def\ker{\operatorname {ker}}
\def\End{\operatorname {End}}
\def\rk{\operatorname {rk}}
\def\gldim{\operatorname {gl\,dim}}
\def\r{\rightarrow}
\newtheorem{lemma}{Lemma}[section]
\newtheorem{proposition}[lemma]{Proposition}
\newtheorem{theorem}[lemma]{Theorem}
\newtheorem{corollary}[lemma]{Corollary}
\newtheorem{lemmas}{Lemma}[subsection]
\newtheorem{theorems}[lemmas]{Theorem}
\newtheorem*{sublemma}{Sublemma}
\theoremstyle{definition}
\newtheorem{example}[lemma]{Example}
\theoremstyle{remark}
\newtheorem{remark}[lemma]{Remark}
\newdimen\uboxsep \uboxsep=1ex
\def\uboxn#1{\vtop to 0pt{\hrule height 0pt depth 0pt\vskip\uboxsep
\hbox to 0pt{\hss #1\hss}\vss}}
\def\uboxs#1{\vbox to 0pt{\vss\hbox to 0pt{\hss #1\hss}
\vskip\uboxsep\hrule height 0pt depth 0pt}}
\newcommand{\rn}{\gamma_{r,n}}
\def\Sym{\operatorname{Sym}} \def\Sp{\operatorname{Sp}}
 \def\codim{\operatorname{codim}}
 \let\oldmarginpar\marginpar
\def\marginpar#1{\oldmarginpar{\color{blue}\tiny #1}}
 \author{\v{S}pela \v{S}penko}
\email[\v{S}pela \v{S}penko]{Spela.Spenko@ed.ac.uk}
\address{School of Mathematics\\
  The University of Edinburgh\\
  James Clerk Maxwell Building\\
  The King's Buildings\\
  Peter Guthrie Tait Road\\
  EDINBURGH\\
  EH9 3FD \\
  Scotland, UK} \author{Michel Van den Bergh} \email[Michel Van den
Bergh]{michel.vandenbergh@uhasselt.be}
\address{Department of mathematics\\Universiteit Hasselt\\
  Martelarenlaan 42\\
  3500 Hasselt\\
  Belgium} 
\thanks{The first author was supported by the
  L'Or\'eal-UNESCO scholarship ``For women in science''.}
\thanks{The second author is a senior researcher at the
  Research Foundation Flanders} 
\keywords{Non-commutative resolutions, determinantal varieties}
\subjclass{13A50,14L24,16E35} \title[Comparing resolutions]{Comparing
  the commutative and non-commutative resolutions for determinantal
  varieties of skew symmetric and symmetric matrices}
\begin{document}
\begin{abstract} Let $Y$ be the variety of (skew) symmetric $n\times n$-matrices of rank $\le r$.  In paper we construct a full faithful
embedding between the derived category of a non-commutative resolution of $Y$, constructed earlier by the authors, and the 
derived category of the classical Springer resolution of $Y$.
\end{abstract}
\maketitle
\section{Introduction}
\label{ref-1-0}
Throughout $k$ is an algebraically closed field of characteristic zero. If $\Lambda$ is a right noetherian
ring then we write $\Dscr(\Lambda)$ for $D^b_f(\Lambda)$, the bounded derived category
of right $\Lambda$-modules with finitely generated cohomology. Similarly for a noetherian scheme/stack $X$ we write
$\Dscr(X):=D^b_{\coh}(X)$. 

\medskip

If $Y$ is the determinantal variety of $n\times n$-matrices of rank
$\le r$ then in \cite{VdB100} (and independently in
\cite{SegalDonovan}) a ``non-commutative crepant resolution'' \cite{Leuschke,VdB32}
$\Lambda$ for $k[Y]$ was constructed.  Such an NCCR is a $k[Y]$-algebra which has in
particular the property that $\Dscr(\Lambda)$ is a ``strongly crepant
categorical resolution'' of~$\operatorname{Perf}(Y)$ (the derived
category of perfect complexes on $Y$) in the sense of~\cite[Def.\
3.5]{Kuznetsov}. This NCCR was constructed starting from a tilting bundle on the standard Springer type resolution 
of singularities $Z\r Y$ where $Z$ is a vector bundle over a Grassmannian. Indeed 
the main properties of $\Lambda$  were derived from the existence of a derived equivalence
between $\Dscr(\Lambda)$ and $\Dscr(Z)$. 

\medskip

In this paper we discuss suitably adapted versions of these results for determinantal varieties of symmetric matrices and
skew symmetric matrices. It turns out that both settings are very similar but notationally cumbersome to
treat together. So we present our main results and arguments in the skew symmetric case. The modifications needed for the symmetric
case will be discussed briefly in Section~\ref{symsec}.

\medskip

Let $n>r>0$ with $2|r$ and now let $Y$ be the variety of skew symmetric $n\times n$-matrices of rank $\le r$. 
If $n$ is odd then in \cite{SpenkoVdB}  we constructed an NCCR
 $\Lambda$ for $k[Y]$ (the existence of the resulting strongly crepant categorical resolution of $Y$ was conjectured in \cite[Conj.\ 4.9]{Kuznetsov4}).
  The construction of $\Lambda$ also works when  $n$ is even but then
 $\Lambda$ is not an NCCR, albeit very close to one. In particular one may show that $\Dscr(\Lambda)$ is a
 ``weakly crepant categorical resolution'' of $\operatorname{Perf}(Y)$, again in the sense of \cite{Kuznetsov} (see \cite{Abuaf} for an entirely different construction of such resolutions).

\medskip

In contrast to \cite{VdB100,SegalDonovan} the construction of the NCCR
$\Lambda$ is based on invariant theory and does not use geometry.
Nonetheless it is well known that also in this case $Y$ has a canonical
(commutative) Springer type resolution of singularities $Z\r Y$ and our
main concern below will be the relationship between the
resolutions $\Lambda$ and $Z$.  In particular we will construct a
$k[Y]$-linear embedding
\begin{equation}
\label{ref-1.1-1}
\Dscr(\Lambda)\hookrightarrow \Dscr(Z).
\end{equation}
For $n$ odd such an inclusion is expected  by the fact that NCCRs are conjectured to yield 
minimal categorical resolutions. Note that the embedding \eqref{ref-1.1-1} turns out to be somewhat non-trivial. The image of $\Lambda$ is a coherent sheaf of $\Oscr_Z$-modules, but it is not a vector bundle.

\medskip

As already mentioned, the construction of $\Lambda$ uses invariant theory.  We explain this next. Let $H$, $V$ be
vector spaces of dimension $n$, $r$ with $V$ being in addition equipped
with a symplectic bilinear form $\langle-,-\rangle$. The corresponding symplectic group is denoted
by $\Sp(V)$. 

If ${{\chi}}$
is a partition with $l({{\chi}})\le r/2$  then we let $S^{\langle{{\chi}}\rangle}V$ be the
irreducible representation
of $\Sp(V)$ with highest weight~${{\chi}}$. If ${{\chi}}=({{\chi}}_1,\ldots,{{\chi}}_{r})\in\ZZ^r$ is a dominant $\GL(V)$-weight then we let $S^{{\chi}} V$ be the 
irreducible $\GL(V)$-representation with highest weight ${{\chi}}$.

Put
$
X=\Hom(H,V)
$ 
and let $T$ be the coordinate ring of $X$:
\[
T=\Sym_k(H\otimes_k V^\vee).
\]
Put 
\begin{equation}
\label{ref-1.2-2}
M({{\chi}}):=
(S^{\langle{{\chi}}\rangle}V\otimes_k T)^{\Sp(V)}\,.
\end{equation}
Thus $M(\chi)$ is a ``module of covariants'' in the sense of \cite{VdB9}.
Let $B_{m,n}$ be the set of partitions contained in a box with $m$
rows and $n$ columns. Put 
\begin{equation}
\label{ref-1.3-3}
M=\bigoplus_{{{\chi}}\in B_{r/2,\lfloor n/2\rfloor-r/2}} M({{\chi}})
\end{equation}
and 
$
\Lambda=\End_{R}(M)
$.
In \cite{SpenkoVdB} the following result (which improves on \cite{WeymanZhao}) was proved:
\begin{theorem} 
\label{ref-1.1-4} One has $\gldim\Lambda<\infty$. Moreover if $n$ is odd then 
$\Lambda$ is a Cohen-Macaulay $R:=T^{\Sp(V)}$-module. In other words, in the terminology of
\cite{Leuschke,VdB32}, 
when $n$ is odd $\Lambda$ 
is a non-commutative crepant resolution (NCCR) of $R$.
\end{theorem}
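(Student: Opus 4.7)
The plan is to apply the general invariant-theoretic framework for modules of covariants of a quasi-symmetric reductive group representation. The first key observation is that $X=\Hom(H,V)$ is quasi-symmetric as an $\Sp(V)$-representation: because the symplectic form identifies $V$ with $V^\vee$, the weights of $X$ with respect to a maximal torus of $\Sp(V)$ come in opposite pairs with equal multiplicities, so the sum of weights along any one-parameter subgroup of $\Sp(V)$ vanishes. This weight symmetry is the combinatorial input that drives the rest of the construction.

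For the finite global dimension statement, one shows that the collection of highest weights parametrised by $B_{r/2,\lfloor n/2\rfloor-r/2}$ is ``strongly critical'' in the sense that all differences ${\chi}-{\chi'}$ with ${\chi},{\chi'}\in B$ lie inside a suitable translate of the weight polytope of $X$. The box dimensions are calibrated precisely to make this hold. By a relative Borel--Weil--Bott argument on the quotient stack $[X/\Sp(V)]$, combined with Koszul-type resolutions of the diagonal, one obtains $\Ext^i_R(M({\chi}),M({\chi'}))=0$ for $i>0$ and all ${\chi},{\chi'}\in B$, and one identifies $\Dscr(\Lambda)$ with the derived category of an open smooth substack of $[X/\Sp(V)]$ via the tilting bundle corresponding to $M$. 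Finite global dimension of $\Lambda$ is then inherited from smoothness of the stack.

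For the Cohen--Macaulay assertion when $n$ is odd, the strategy is to realise each $M({\chi})$ as the pushforward of an $\Sp(V)$-equivariant vector bundle from the Springer-type resolution $Z\to Y$, via the geometric technique of computing $\Sym$-modules as pushforwards from Grassmannian-like spaces. Acyclicity of this pushforward, together with the Cohen--Macaulayness of $Z$, forces each $M({\chi})$ to be a maximal Cohen--Macaulay $R$-module and hence so is $\Lambda = \bigoplus_{\chi,\chi'} \Hom_R(M({\chi}),M({\chi'}))$. By Borel--Weil--Bott the required acyclicity reduces to a combinatorial condition on the highest weights that appear, namely that they avoid the walls of the dominant chamber after the $\rho$-shift.

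The main obstacle lies in this last vanishing, which is the point where the proof distinguishes the two parities of $n$. For $n$ odd, the extra unit of space coming from $\lfloor n/2\rfloor=(n-1)/2$ ensures that every relevant $\rho$-shifted weight lies strictly inside the dominant chamber, so the Borel--Weil--Bott computation produces only degree-zero cohomology and $M({\chi})$ is Cohen--Macaulay. For $n$ even, one loses precisely this unit of room: certain weights land on walls of the Weyl chamber, producing non-vanishing higher cohomology, so the Cohen--Macaulay property fails and one obtains only a \emph{weakly} crepant resolution, as flagged in the discussion preceding the theorem.
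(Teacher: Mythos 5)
The paper does not actually prove Theorem \ref{ref-1.1-4}; it quotes it from \cite{SpenkoVdB}, so the comparison is with the argument there. Your first half is broadly faithful to that argument: quasi-symmetry of $V^{\oplus n}$ as an $\Sp(V)$-module (self-duality of $V$, weights $\pm\epsilon_i$ with equal multiplicity) and the ``strong criticality'' of the box $B_{r/2,\lfloor n/2\rfloor-r/2}$ (equivalently, $\chi+\bar\rho$ lying in $\tfrac12\bar\Sigma$) are indeed the inputs for $\Ext^{>0}_R(M(\chi),M(\chi'))=0$ and for $\gldim\Lambda<\infty$, and the $n$ odd/even dichotomy does come from interior versus boundary of that polytope. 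One inaccuracy: there is no ``open smooth substack'' of $X/\Sp(V)$ in play --- $\Sp(V)$ has no nontrivial characters, so no GIT linearization; in \cite{SpenkoVdB} finite global dimension is obtained purely algebraically, by producing for each dominant $\mu$ a finite resolution of $M(\mu)$ by sums of the chosen $M(\chi)$'s.

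The Cohen--Macaulay half has genuine gaps. First, your mechanism is unavailable: $M(\chi)$ is \emph{not} the pushforward of an $\Sp(V)$-equivariant vector bundle on the Springer resolution $Z$. The sheaf $\Mscr_Z(\chi)$ with $Rp_\ast\Mscr_Z(\chi)\cong M(\chi)$ is a coherent sheaf that is not locally free (its fibre $N_Q(\chi)$ has a nontrivial free resolution over $\Sym(\wedge^2Q)$, see Section \ref{ref-3-12}); this failure is precisely what makes the comparison between $\Lambda$ and $Z$ the nontrivial content of the present paper. Second, even granting that each $M(\chi)$ is maximal Cohen--Macaulay, it does not follow that $\Lambda=\End_R(M)=\bigoplus_{\chi,\chi'}\Hom_R(M(\chi),M(\chi'))$ is Cohen--Macaulay: CM-ness of a reflexive module never implies CM-ness of its endomorphism ring, and this implication is exactly the hard point in constructing any NCCR. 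In \cite{SpenkoVdB} one instead identifies $\Hom_R(M(\chi),M(\chi'))$ itself as a module of covariants $(\Hom_k(S^{\langle\chi\rangle}V,S^{\langle\chi'\rangle}V)\otimes_k T)^{\Sp(V)}$ and verifies its CM-ness directly by a local-cohomology criterion governed by the position of the weights in $\tfrac12\Sigma$. Third, even in the vector-bundle setting your inference ``$R^{>0}p_\ast\Escr=0$ plus $Z$ Cohen--Macaulay implies $p_\ast\Escr$ is Cohen--Macaulay'' is false without a companion vanishing for the Grothendieck dual $\Escr^\vee\otimes\omega_{Z/Y}$. So the strategy for the second assertion needs to be replaced, not merely tightened.
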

By the first fundamental theorem for the symplectic group $R$ is
a quotient of $\Sym_k(\wedge^2 H)$ so that dually $\Spec R\hookrightarrow \wedge^2 H^\vee\subset \Hom_k(H,H^\vee)$.
The second fundamental theorem for the symplectic group yields
\[
\Spec R=\{\psi\mid \psi\in \Hom_k(H,H^\vee),\psi+\psi^\vee=0,\rk \psi\le r\}\,.
\]
so that $\Spec R\cong Y$ with $Y$ as introduced above. Below we identify $R$
with $k[Y]$.

We now discuss the Springer resolution $p:Z\r Y$ as well as the inclusion
$\Dscr(\Lambda)\hookrightarrow \Dscr(Z)$ announced in \eqref{ref-1.1-1}.
Let $ F=\Gr(r,H)$ be the Grassmannian of $r$-dimensional quotients
$H\twoheadrightarrow Q$ of $H$ and put
\[
Z=\{(\phi,Q)\mid Q\in F,\phi\in \Hom_k(Q,Q^\vee),\phi+\phi^\vee=0\}\,.
\]
The Springer resolution $p:Z\r Y\hookrightarrow  \Hom_k(H,H^\vee)$  of $Y$  sends 
$(\phi,Q)$ to the composition
\[
[H\twoheadrightarrow Q\xrightarrow{\phi} Q^\vee \hookrightarrow H^\vee]\in \Hom_k(H,H^\vee)\,.
\]
 Using again the fundamental theorems for
the symplectic group we have
\begin{equation}
\label{ref-1.4-5}
\Sym_k(Q\otimes_k V^\vee)^{\Sp(V)}\cong\Sym_k(\wedge^2 Q)
\end{equation}
(since $\dim Q=\dim V$, there are no relations on the righthand side).
For a partition~${{\chi}}$ with $l({{\chi}})\le r/2$ we put 
\begin{equation}
\label{eq:mq}
M_Q({{\chi}})=(\det Q)^{\otimes r-n}\otimes_k (S^{\langle {{\chi}}\rangle} V\otimes_k \Sym_k(Q\otimes_k V^\vee))^{\Sp(V)}
\end{equation}
where we consider $M_Q({{\chi}})$ as a $\GL(Q)$-equivariant $\Sym_k(\wedge^2 Q)$-module via \eqref{ref-1.4-5}.

Choose a specific $(H\twoheadrightarrow Q)\in F$. One has
$F=\GL(H)/P_Q$ where $P_Q$ is the parabolic subgroup of $\GL(H)$ that
stabilizes the kernel of $H\twoheadrightarrow Q$.  We regard
$\GL(Q)$-equivariant objects tacitly as $P_Q$-equivariant objects
through the canonical morphism $P_Q\twoheadrightarrow \GL(Q)$.  Taking the fiber in
$Q$ defines an equivalence between $\coh(\GL(H),Z)$ and
$\mod(P_Q,\Zscr_Q)$ where $\Zscr_Q:=\Sym_k(\wedge^2 Q)$, whose inverse
will be denoted by $\widetilde{?}$.
Put
\[
\Mscr_{Z}({{\chi}})=\widetilde{M_Q({{\chi}})}\in \coh(\GL(H),Z)\,.
\]
\begin{theorem} (see \S\ref{ref-5.2-29})
\label{ref-1.2-6}  Let $\mu,\lambda\in B_{r/2,n-r}$. 
\begin{enumerate}
\item
\label{ref-1-7}
We have for $i>0$.
\[
\Ext^i_Z(\Mscr_Z(\lambda),\Mscr_Z(\mu))=0.
\]
\item  There are isomorphisms as $R$-modules
\begin{equation}
\label{ref-1.5-8}
R\Gamma(Z,\Mscr_Z(\lambda))\cong M(\lambda)\,.
\end{equation}
\item \label{ref-3-9}
Applying $p_\ast$  induces an isomorphism
\begin{align}
\Hom_Z(\Mscr_Z(\lambda),\Mscr_Z(\mu))&\overset{p_\ast}{\cong} \Hom_Y(p_\ast\Mscr_Z(\lambda),p_\ast\Mscr_Z(\mu))\label{ref-1.6-10}\\
&\cong \Hom_R(\Gamma(Z,\Mscr_Z(\lambda)),\Gamma(Z,\Mscr_Z(\mu)))&&\text{($Y$ is affine)}\nonumber\\
&\cong \Hom_R(M(\lambda),M(\mu))&& \text{(by \eqref{ref-1.5-8})}\nonumber
\end{align}
\end{enumerate}
\end{theorem}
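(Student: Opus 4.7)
The key structural input is that the projection $q\colon Z \to F$ is affine, since $Z$ is the total space of the $\GL(H)$-equivariant vector bundle $\wedge^2 \Qscr^\vee$ on the Grassmannian $F = \Gr(r, H)$, where $\Qscr$ is the tautological quotient. Hence $Rq_\ast = q_\ast$ and every cohomological computation on $Z$ reduces to the corresponding one on $F$ after pushforward.

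For part (2), my plan is as follows. First, decompose $M_Q(\chi)$ as a $\GL(Q)$-representation: combining Cauchy, $\Sym(Q \otimes V^\vee) = \bigoplus_\nu S^\nu Q \otimes S^\nu V^\vee$, with Littlewood's $\GL(V) \downarrow \Sp(V)$ branching rule yields
\[
M_Q(\chi) \cong (\det Q)^{\otimes(r-n)} \otimes \bigoplus_\nu m_{\chi,\nu}\, S^\nu Q,
\]
where $m_{\chi,\nu}$ is the multiplicity of $S^{\langle\chi\rangle}V$ in $S^\nu V|_{\Sp(V)}$. Transporting through $\widetilde{?}$ gives
\[
q_\ast\Mscr_Z(\chi) \cong (\det\Qscr)^{\otimes(r-n)} \otimes \bigoplus_\nu m_{\chi,\nu}\, S^\nu\Qscr
\]
as a $\GL(H)$-equivariant coherent sheaf on $F$. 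Now apply Borel--Weil--Bott summand by summand: for $\chi \in B_{r/2, n-r}$ each weight $\nu + ((r-n)^r, 0^{n-r})$, after $\rho$-shift, is either singular (cohomology zero) or already dominant (cohomology concentrated in degree $0$), so no higher cohomology survives. A parallel Cauchy/Littlewood bookkeeping on $T = \Sym(H \otimes V^\vee)$ matches the total $H^0$ with $M(\chi) = (S^{\langle\chi\rangle}V \otimes T)^{\Sp(V)} = \bigoplus_\nu m_{\chi,\nu}\, S^\nu H$.

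For part (1), I apply the same machinery to $\uRHom_Z(\Mscr_Z(\lambda),\Mscr_Z(\mu))$: replace $M_Q(\lambda)$ by a $P_Q$-equivariant free $\Zscr_Q$-resolution, transport each term to $F$ as a direct sum of bundles $(\det\Qscr)^{r-n} \otimes S^\sigma \Qscr$, and conclude higher-cohomology vanishing from the same Bott analysis in the range $\lambda, \mu \in B_{r/2, n-r}$. For part (3), given (2), the right-hand isomorphism is (2) itself and the middle one is affineness of $Y$; the first, $\Hom_Z \overset{p_\ast}{\cong} \Hom_Y$, then follows from the Leray spectral sequence $H^p(Y, R^q p_\ast \uHom_Z) \Rightarrow \Ext^{p+q}_Z$ combined with (1) and (2).

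The main technical obstacle is the Bott-regularity analysis: because the twist by $(\det\Qscr)^{r-n}$ is negative (as $r < n$), the raw weight of each $(\det\Qscr)^{r-n} \otimes S^\nu \Qscr$ is never dominant, and one must verify, for every $\nu$ appearing in $M_Q(\chi)$ for $\chi \in B_{r/2, n-r}$, that the $\rho$-shifted weight falls into exactly one of the two benign regimes (singular, or already dominant without sorting). The bound $\chi \in B_{r/2, n-r}$ is calibrated precisely so that no summand ever contributes to positive-degree cohomology and the surviving degree-$0$ pieces reassemble to $M(\chi)$; a slightly more intricate variant of this combinatorial check underlies the Ext-vanishing in part (1).
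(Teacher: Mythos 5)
Your overall route---push everything down the affine projection $\pi:Z\to F$ (which you call $q$; in the paper $q$ denotes $X\to Y$) and run Borel--Weil--Bott summand by summand---is genuinely different from the paper's, which instead works through the correspondence $Z\xleftarrow{\tilde q}E\xrightarrow{\tilde p}X$ and a splitting functor $\Phi:\Dscr(Z)\to\Dscr(X/\Sp(V))$. Your Bott dichotomy is in fact correct and does not even need $\chi\in B_{r/2,n-r}$: for \emph{any} partition $\nu$ with $l(\nu)\le r$ the $\rho$-shifted weight of $(\det\Qscr)^{\otimes r-n}\otimes S^\nu\Qscr$ is $(\nu_1+r-1,\dots,\nu_r,\,n-r-1,\dots,0)$, which is singular unless $\nu_r\ge n-r$, in which case it is already dominant. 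Two caveats on part (2), though: the surviving summands are $S^{\nu+((r-n)^r,0^{n-r})}H$, and matching them with $M(\chi)=\bigoplus_\mu m_{\chi,\mu}S^\mu H$ uses the identity $m_{\chi,\mu}=m_{\chi,\mu+((n-r)^r)}$, i.e.\ the fact that $\det V$ is trivial as an $\Sp(V)$-representation---this should be said explicitly; and a character count only yields an isomorphism of graded $\GL(H)$-representations, whereas the theorem asserts an isomorphism of $R$-modules, so you still need an actual map (e.g.\ comparison over $Y_0$ plus torsion-freeness/reflexivity).

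The real gap is in part (1). After resolving $\Mscr_Z(\lambda)$ by $P_\bullet$ with $P_t$ a sum of free modules on $(\det\Qscr)^{\otimes r-n}\otimes S^\sigma\Qscr$, the terms of $\uHom(P_\bullet,\Mscr_Z(\mu))$ pushed to $F$ are $(S^\sigma\Qscr)^\vee\otimes\pi_\ast\Nscr_Z(\mu)$ (the determinant twists cancel), so it is not literally ``the same Bott analysis''; but more importantly, even if every term has cohomology concentrated in degree $0$, the hypercohomology spectral sequence only tells you that $\Ext^i_Z(\Mscr_Z(\lambda),\Mscr_Z(\mu))$ equals the $i$-th cohomology of the complex of global sections $\Hom_Z(P_\bullet,\Mscr_Z(\mu))$. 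That complex is \emph{not} exact in positive degrees for formal reasons: $N_Q(\lambda)$ has positive projective dimension over $\Sym(\wedge^2 Q)$, so the local Ext sheaves are nonzero, and the asserted vanishing of the global $\Ext$ is a genuine cancellation. Proving exactness of this complex of $R$-modules is exactly the hard step; the paper accomplishes it with the equivariance trick of Lemma \ref{ref-5.2.4-44} (the maps in the pushed-down complex are forced to have constant coefficients, and exactness over the dense open locus then propagates everywhere), and your proposal does not address it. Similarly, in part (3) the Leray spectral sequence computes $\Ext^\bullet_Z$ from $Rp_\ast\uRHom_Z$, but it does not compare $p_\ast\uHom_Z(\Mscr_Z(\lambda),\Mscr_Z(\mu))$ with $\uHom_Y(p_\ast\Mscr_Z(\lambda),p_\ast\Mscr_Z(\mu))$; the natural map between these is not an isomorphism for formal reasons, and the paper instead argues that both sides are reflexive and agree on $Y_0$ with $\codim(Y\setminus Y_0)\ge 2$.
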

From this theorem it follows in particular that
\[
\Mscr_Z:=\bigoplus_{{{\chi}}\in B_{r/2,\lfloor n/2\rfloor-r/2}} \Mscr_Z({{\chi}})
\]
satisfies
\[
\Ext^i_Z(\Mscr_Z,\Mscr_Z)=
\begin{cases}
\Lambda&\text{if $i=0$}\\
0&\text{if $i>0$}
\end{cases}
\]
and we obtain the following more precise version of \eqref{ref-1.1-1}:
\begin{corollary} 
\label{ref-1.3-11} There is a full exact embedding
\[
-\Lotimes_{\Lambda} \Mscr_Z: \Dscr(\Lambda)\hookrightarrow \Dscr(Z)\,.
\]
\end{corollary}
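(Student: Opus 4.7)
Since $\Lambda=\End_Z(\Mscr_Z)$, the sheaf $\Mscr_Z$ is naturally a $(\Lambda,\Oscr_Z)$-bimodule, and the functor $-\Lotimes_{\Lambda}\Mscr_Z$ is the left adjoint of $\RHom_Z(\Mscr_Z,-):\Dscr(Z)\to\Dscr(\Lambda)$. My plan is to verify the standard criterion for full faithfulness of a tilting-type functor: the unit of adjunction is an isomorphism on a classical generator of the source, and hence on every object by devissage. The resulting functor is $k[Y]$-linear automatically, since all constructions respect the action of $R=k[Y]$.

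First I would check that $-\Lotimes_{\Lambda}\Mscr_Z$ really lands in $\Dscr(Z)$. By Theorem~\ref{ref-1.1-4} we have $\gldim\Lambda<\infty$, so every $M\in\Dscr(\Lambda)$ admits a bounded resolution $P^\bullet\to M$ by finitely generated projective right $\Lambda$-modules, each of which is a summand of some $\Lambda^n$. Then $P^\bullet\otimes_\Lambda\Mscr_Z$ is a bounded complex of direct summands of $\Mscr_Z^{\oplus n}$, hence coherent on $Z$.

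The central step is to show that the unit
\[
\eta_M:M\longrightarrow\RHom_Z(\Mscr_Z,M\Lotimes_\Lambda\Mscr_Z)
\]
is an isomorphism for every $M\in\Dscr(\Lambda)$. The full subcategory of $M$ satisfying this condition is a thick triangulated subcategory of $\Dscr(\Lambda)$. Since $\gldim\Lambda<\infty$, the object $\Lambda$ classically generates $\Dscr(\Lambda)$, so it suffices to check $\eta_\Lambda$. But $\Lambda\Lotimes_\Lambda\Mscr_Z=\Mscr_Z$, and $\eta_\Lambda$ becomes the canonical map $\Lambda\to\RHom_Z(\Mscr_Z,\Mscr_Z)$. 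By Theorem~\ref{ref-1.2-6}(\ref{ref-1-7}) the higher $\Ext$-groups vanish, while Theorem~\ref{ref-1.2-6}(\ref{ref-3-9}) combined with \eqref{ref-1.5-8} yields $\Hom_Z(\Mscr_Z,\Mscr_Z)\cong\End_R(M)=\Lambda$, as needed.

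The only genuine obstacle is Theorem~\ref{ref-1.2-6} itself; once it is established, the corollary follows by a purely formal tilting-theoretic argument. I therefore expect the substantive work of the paper to lie in proving the theorem, not the corollary.
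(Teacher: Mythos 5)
Your proposal is correct and is essentially the argument the paper intends: the corollary is presented as a formal consequence of the computation $\Ext^i_Z(\Mscr_Z,\Mscr_Z)=\Lambda$ for $i=0$ and $0$ for $i>0$ (itself deduced from Theorem \ref{ref-1.2-6}), together with $\gldim\Lambda<\infty$, via the standard tilting-type adjunction/unit argument you describe. The substantive content indeed lies in Theorem \ref{ref-1.2-6}, which the paper proves by entirely different means (the splitting functor $\Phi$), and your reduction of the corollary to that theorem matches the paper's.
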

\begin{remark} Put 
\[
M':=\bigoplus_{\chi\in B_{r/2,n-r}} M(\chi)
\]
and $\Gamma=\End_R(M')$. It follows from \cite[Thm 1.5.1]{SpenkoVdB}
(applied with $\Delta=\epsilon \bar{\Sigma}$ for a sufficiently small $\epsilon>0$)
 that $\gldim \Gamma<\infty$.  
See the computation in \S6 in loc.\ cit.. 
We have $\Lambda=e\Gamma e$ for a suitable idempotent $e$.
The fact that
$\gldim \Lambda<\infty$ implies that $\Gamma$ cannot be an NCCR
by  \cite[Ex.\ 4.34]{Wemyss1} (see also \cite[Remark 3.6]{SpenkoVdB}).
In the terminology 
of \cite{SpenkoVdB} $\Gamma$ is a (non-crepant) non-commutative
resolution of $R$. As in Corollary \ref{ref-1.3-11} we still have an embedding
$\Dscr(\Gamma)\subset \Dscr(Z)$.
\end{remark}
\section{Acknowledgement}
This paper owes a great deal to Sasha Kuznetsov who made the initial
conjecture that there should be an embedding
$\Dscr(\Lambda)\hookrightarrow \Dscr(Z)$ and at the same time also indicated a possible
proof. Furthermore, after carefully reading a preliminary version of this
manuscript, Kuznetsov suggested that the embedding we had constructed at that time might be coming from
a  ``splitting
functor'' \cite[\S3]{Kuznetsov3} $\Dscr(Z)\r \Dscr(X/\Sp(V))$. This observation turned out to be correct
and has allowed us to simplify our proofs and moreover to clarify our
statements. See \S\ref{ref-5-27}.

\medskip

After this paper was posted on the arXiv Steven Sam informed us that one of our auxiliary results 
concerning the $M_Q(\chi)$ introduced above can be generalized using more sophisticated machinery 
(see Remark \ref{sam2} below).  We are grateful for these very insightful comments.


\section{A $\GL(Q)$-equivariant free resolution of $M_Q(\lambda)$}
\label{ref-3-12}
In this section we discuss some of the properties of the $\Sym_k(\wedge^2 Q)$-modules ${M_{Q}}(\lambda)$ introduced in the introduction.
We basically restate some results from \cite{WeymanSam} in our current language. To do this it will be convenient to consider
\[
N_Q(\chi):= (S^{\langle {{\chi}}\rangle} V\otimes_k \Sym_k(Q\otimes_k V^\vee))^{\Sp(V)}
\]
so that $M_Q(\chi)=(\det Q)^{\otimes r-n}\otimes_k N_Q(\chi)$. Since $\det Q$ is one-dimensional, $M_Q(\lambda)$ and $N_Q(\lambda)$ have identical
properties.

The following fact will not be used although it seems interesting to know
\begin{lemma} ${N_Q}(\lambda)$ is a reflexive $\Sym_k(\wedge^2Q)$-module.
\end{lemma}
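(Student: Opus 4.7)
My proof plan is to verify Serre's condition $S_2$ for $N_Q(\lambda)$ over the regular ring $R := \Sym_k(\wedge^2 Q)$, which is equivalent to reflexivity. The key is the GIT description: $R$ arises as the invariant ring $k[X]^{\Sp(V)}$ for $X := \Hom(V,Q)$, and $N_Q(\lambda) = (S^{\langle\lambda\rangle}V \otimes k[X])^{\Sp(V)}$ is the corresponding module of covariants.

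\textbf{Step 1 (principal bundle locus).} The categorical quotient $\pi\colon X \to \wedge^2 Q$ sends $\phi$ to the induced skew form $\phi\circ\omega_V^{-1}\circ\phi^\vee$ on $Q$. Above the open locus $W \subset \wedge^2 Q$ of non-degenerate skew forms, $\pi$ restricts to a principal $\Sp(V)$-bundle, because any non-degenerate skew form on $Q$ is isometric to $(V,\omega_V)$ and the set of isometries is an $\Sp(V)$-torsor. Hence $\widetilde{N_Q(\lambda)}|_W$ is the locally free sheaf associated with $S^{\langle\lambda\rangle}V$, and in particular satisfies $S_2$ on $W$.

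\textbf{Step 2 (rank stratification).} Write $D_k = \{\psi \in \wedge^2 Q : \rk\psi \le r-2k\}$; then $\codim_{\wedge^2 Q} D_k = \binom{2k}{2}$. So the bad locus $D := \wedge^2 Q \setminus W = D_1$ is a hypersurface, while $D_2$ already has codimension at least $6$. Consequently, the only primes of height $\ge 2$ in $R$ at which $S_2$ needs to be checked are either contained in $D_2$ (already of codimension $\ge 6$), or are codimension-$\ge 2$ subvarieties contained in the open stratum $D_1 \setminus D_2$ of rank exactly $r-2$ skew forms. A Luna slice argument at a generic point of $D_1$ reduces the latter case to an analogous problem for $\Sp_{r-2}$ acting on a smaller linear space, in which the relevant bad locus is a coordinate hyperplane; one concludes by induction on $r$, the base case $r=2$ being trivial since then $R = k[t]$ and $N_Q(\lambda)$ is a free rank-one module.

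\textbf{Main obstacle.} The delicate part is the slice analysis: one must identify the slice representation via the branching $\Sp(V) \downarrow \Sp_{r-2} \times \Sp_2$ applied to $S^{\langle\lambda\rangle}V$, and verify that the resulting module of covariants has the requisite depth at the codimension-one stratum of the slice. A conceptually cleaner — though heavier — alternative is to deduce the lemma from the $\GL(Q)$-equivariant free resolution of $M_Q(\lambda)$ constructed in the next section (after \cite{WeymanSam}): since $M_Q(\lambda)$ and $N_Q(\lambda)$ differ only by a twist by a line bundle, the length of this resolution bounds the projective dimension of $N_Q(\lambda)$, and combined with Auslander--Buchsbaum this controls depth and yields reflexivity at once.
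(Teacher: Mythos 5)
Your overall strategy (reflexivity $=$ torsion-free $+$ $(S_2)$ over the polynomial ring $\Sym_k(\wedge^2Q)$, with local freeness over the principal-bundle locus) is sound in principle, but the proof is not actually completed, and the places where it stops are exactly the hard ones. First, you explicitly defer the Luna slice computation — identifying the slice representation, branching $S^{\langle\lambda\rangle}V$ to $\Sp_{r-2}\times\Sp_2$, and verifying the depth condition along the codimension-one stratum — and this is the entire content of the claim; nothing in Steps 1--2 establishes $\depth\ge 2$ anywhere on the Pfaffian hypersurface. Second, you cannot dismiss the primes lying in $D_2$ on the grounds that $D_2$ has codimension $6$: Serre's condition $(S_2)$ must be verified at \emph{every} prime of height $\ge 2$, and high codimension of the support of the non-free locus does not by itself give depth $\ge 2$ there. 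Third, the proposed Auslander--Buchsbaum shortcut only controls $\depth$ at the graded maximal ideal (or gives $\depth_{R_q}N_q\ge \dim R_q-\operatorname{pd}_R N$ after localizing), which fails to give $(S_2)$ at primes of small height on the degenerate locus since the resolution of Proposition \ref{ref-3.2-14} can be long.

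The paper's proof is a one-liner and bypasses all of this: $N_Q(\lambda)$ is a module of covariants $(S^{\langle\lambda\rangle}V\otimes_k \Sym_k(Q\otimes_k V^\vee))^{\Sp(V)}$, and such a module is automatically reflexive whenever the quotient map $\Spec\Sym_k(Q\otimes_kV^\vee)\to\Spec\Sym_k(\wedge^2Q)$ contracts no divisor (the point being that a $G$-invariant rational section has $G$-stable polar divisor, so $N_Q(\lambda)$ is the intersection of its localizations at height-one primes of the quotient). Here the source is an affine space, hence a UFD, and $\Sp(V)$ has no nontrivial characters, so every $\Sp(V)$-stable prime divisor is cut out by an invariant and therefore maps onto a hypersurface in the quotient; no divisor is contracted. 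If you want to salvage your approach, you should either carry out the slice/branching analysis in full (including the deep strata) or, better, invoke this standard criterion for modules of covariants as the paper does.
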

\begin{proof} This follows for example from the fact that $\Spec\Sym_k(Q\otimes_k V^\vee)\r \Spec\Sym_k(\wedge^2 Q)$ contracts 
no divisor.
\end{proof}
\label{ref-3-13}

Recall that a border strip is a connected skew Young diagram not
containing any $2\times 2$ square. The size of a border strip is the
number of boxes it contains. We follow \cite{WeymanSam} and associate
to some partitions $\lambda$ a partition $\tau_{r}(\lambda)$ and a
number $i_{r}(\lambda)$. The definition of $(\tau_r(\lambda),i_r(\lambda))$ is inductive. If $l(\lambda)\leq r/2$ then
$\tau_{r}(\lambda)=\lambda$, $i_r(\lambda)=0$. Suppose now that
$l(\lambda)>r/2$.  If there exists a non empty border strip $R_\lambda$
of size $2 l(\lambda)-r-2$ starting at the first box in the bottom
row of $\lambda$ such that $\lambda\setminus R_\lambda$ is a partition
then $\tau_r(\lambda):=\tau_r(\lambda\setminus R_\lambda)$, and
$i_r(\lambda):=c(R_\lambda)+i_r(\lambda\setminus R_\lambda)$, where
$c(R_\lambda)$ is the number of columns of $R_\lambda$.
Otherwise $\tau_r(\lambda)$ is undefined and $i_r(\lambda)=\infty$.

From \cite[Corollary 3.16]{WeymanSam} we
extract the following result (the role of $\Sym(\wedge^2 Q)$ is played by the ring $A$ in loc.\ cit. 
and  our $\Sym_k(Q\otimes_k V^\vee)$ is denoted by $B$).
\begin{proposition}
\label{ref-3.2-14}
  Assume $\chi$ is a partition with $l(\chi)\le r/2$. Then 
  ${N_Q}(\chi)$ has a $\GL(Q)$-equivariant free resolution as a $\Sym(\wedge^2
  Q)$-module which in homological degree $t\ge 0$ is the direct sum of 
$S^{\lambda}
  Q\otimes_k \Sym_k(\wedge^2 Q)$ for $\lambda$ satisfying $(\tau_r(\lambda),i_r(\lambda))=(\chi,t)$.
\end{proposition}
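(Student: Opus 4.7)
The plan is to appeal directly to \cite[Corollary 3.16]{WeymanSam}, which after a short notational translation gives exactly the claimed resolution. In that reference, Weyman and Sam work with rings $A = \Sym_k(\wedge^2 Q)$ and $B = \Sym_k(Q \otimes_k V^\vee)$ satisfying $A = B^{\Sp(V)}$, and they construct $\GL(Q)$-equivariant minimal free resolutions over $A$ of the modules of covariants $(S^{\langle\chi\rangle}V \otimes_k B)^{\Sp(V)}$. Since our $N_Q(\chi)$ is by definition exactly such a module, the resolution is inherited with no modification.

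The first step I would carry out is to confirm the identification $N_Q(\chi) = (S^{\langle\chi\rangle}V \otimes_k B)^{\Sp(V)}$ as $\GL(Q)$-equivariant $A$-modules; this is immediate from the formula defining $N_Q(\chi)$. The second step is to verify that the combinatorial indexing of the terms in \cite{WeymanSam} agrees with the $(\tau_r, i_r)$ indexing recalled in the paragraph preceding the proposition. In loc.\ cit.\ the $t$-th term of the resolution is indexed by partitions $\lambda$ that reduce to $\chi$ by iteratively removing a border strip of size $2\ell(\mu) - r - 2$ (with $\mu$ the current partition, assumed of length $> r/2$) starting at the bottom-left box, with $t$ equal to the total number of columns among the removed strips. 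This is verbatim the recursion $(\tau_r(\lambda), i_r(\lambda)) = (\chi, t)$, so the terms match.

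If one wished to reprove the result from scratch rather than cite, the main tool would be the Kempf--Lascoux--Weyman geometric technique: realize $\Spec N_Q(\chi)$ as the image of an equivariant vector bundle on an appropriate isotropic Grassmannian, then compute the terms of the associated Koszul-type complex via Borel--Weil--Bott. The genuine difficulty in that approach, and the technical heart of \cite{WeymanSam}, is handling the $\Sp(V)$-invariants: the symplectic branching (Littlewood's rule) is what ultimately produces the border-strip combinatorics and hence the $\tau_r$/$i_r$ bookkeeping. For the purposes of the current paper, however, no new input is required beyond this citation, so the proof amounts to the translation of notation and matching of indexing described above.
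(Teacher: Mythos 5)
Your proposal matches the paper exactly: Proposition \ref{ref-3.2-14} is obtained there by citing \cite[Corollary 3.16]{WeymanSam} with precisely the notational dictionary you describe ($A=\Sym(\wedge^2 Q)$, $B=\Sym_k(Q\otimes_k V^\vee)$, and the border-strip recursion defining $(\tau_r,i_r)$ recalled just before the statement). Your remarks on the geometric technique correspond to material the authors kept only in a commented-out sketch, so no further argument is needed.
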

\begin{example}
\label{ref-3.3-15} Write $[\mu_1,\mu_2,\ldots]$ for $S^{\mu} Q\otimes_k  \Sym_k(\wedge^2 Q)$. Assume $r=4$. Then the above resolution of ${N_Q}(a,b)$ has the form
\[
0\r [a,b,1,1]\r [a,b]
\]
if $b\ge 1$. If $b=0$ then the resolution has only one term given by $[a]$.
\end{example}
\begin{example} 
\label{ref-3.4-16} Assume $r=6$. Now the resolution of ${N_Q}(a,b,c)$ is
\[
0\r [a,b,c,2,2,2] \r [a,b,c,2,1,1] \r [a,b,c,1,1]\r [a,b,c]
\]
if $c\ge 2$. If $c=1$ then we have
\[
0\r [a,b,1,1,1]\r [a,b,1]
\]
If $c=0,b\ge 1$ we get
\[
0 \r [a,b,1,1,1,1]\r [a,b]
\]
Finally for $c=b=0$ the resolution has again only a single term given
by $[a]$.
\end{example}
\begin{remark}
\label{sam2} 
For $\chi_{r/2}\ge r/2-1$ we give an explict description the resolution of $N_Q(\chi)$ (including the differentials) in Appendix \ref{ref-A-50}. 

Steven Sam informed us of an alternative (and more general) approach as follows. There is an action of $\mathfrak{so}(Q+Q^*)$ on $\Sym(Q\otimes_k V^*)$, which commutes with the $\Sp(V)$-action. Therefore $\mathfrak{so}(Q+Q^*)$ acts on $N_Q(\chi)$.  The resolution of $N_Q(\chi)$ in Proposition \ref{ref-3.2-14} can be upgraded to an $\mathfrak{so}(Q+Q^*)$-equivariant resolution, which is a BGG-resolution by parabolic Verma modules of the irreducible highest weight representation $N_Q(\chi)$.
This follows by \cite[Lemma 5.14, Theorem 5.15, Corollary 6.8]{EHP} since $N_Q(\chi)$ is  unitary \cite[Proposition 4.1]{ChengZhang}.

In this way, using \cite[Section 5.3]{EHP} and \cite[Proposition 3.7]{Lepowsky}, one may in fact give an explicit description of the resolution of $N_Q(\chi)$  also for general $\chi$. 
However an analogue of the uniqueness claim of Proposition \ref{prop:uniqueness} is apparently not yet available in the literature.
\end{remark}

Looking at the Examples \ref{ref-3.3-15}, \ref{ref-3.4-16} suggests the following easy consequence of Proposition   \ref{ref-3.2-14} which is crucial for what follows:
\begin{corollary}
\label{ref-3.6-19} The summands of the resolution of ${N_Q}(\chi)$ given in 
Proposition \ref{ref-3.2-14} are all of the form $S^\delta Q\otimes_k \Sym(\wedge^2 Q)$ with $\delta_1=\chi_1$. 
\end{corollary}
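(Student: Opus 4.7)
The plan is to show, by induction on $|\lambda|$, that every border strip $R_\lambda$ appearing in the iterative construction of $\tau_r$ is confined to rows $\ge 2$, so that the operation $\lambda \mapsto \tau_r(\lambda)$ preserves the first part. Combined with $\tau_r(\lambda) = \chi$, this immediately gives $\delta_1 = \lambda_1 = \chi_1$ for every summand $S^\delta Q \otimes_k \Sym(\wedge^2 Q)$ of the resolution.

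For the set-up, I would first note that only partitions with $l(\lambda) \le r = \dim Q$ can index nonzero summands, since $S^\lambda Q = 0$ otherwise; moreover, removing boxes cannot increase the number of rows, so this bound is preserved throughout the recursion.

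Now take $\lambda$ with $r/2 < l(\lambda) \le r$ and consider the first border strip $R_\lambda$, of size $s = 2l(\lambda) - r - 2$, starting at $(l(\lambda),1)$. Let $r'$ and $c'$ be the numbers of rows and columns that $R_\lambda$ occupies; the rim-hook identity $s = r' + c' - 1$ gives $r' \le s$. Suppose for contradiction that $R_\lambda$ reaches row $1$. Because $R_\lambda$ is edge-connected and contains $(l(\lambda),1)$, any path through it from row $1$ to row $l(\lambda)$ must pass through every intermediate row, forcing $r' = l(\lambda)$. Combined with $r' \le s$ this yields
\[
l(\lambda) \le 2l(\lambda) - r - 2, \qquad \text{i.e.,} \qquad l(\lambda) \ge r + 2,
\]
contradicting $l(\lambda) \le r$. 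Hence $R_\lambda$ is supported in rows $2,\ldots,l(\lambda)$, and in particular $(\lambda \setminus R_\lambda)_1 = \lambda_1$.

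Finally, the induction hypothesis applied to $\lambda \setminus R_\lambda$ (which still has at most $r$ rows and strictly fewer boxes) gives $(\tau_r(\lambda \setminus R_\lambda))_1 = (\lambda \setminus R_\lambda)_1 = \lambda_1$, so $\chi_1 = (\tau_r(\lambda))_1 = \lambda_1$. The only non-routine point in this scheme is the numerical estimate preventing $R_\lambda$ from reaching the top row, and once the bound $l(\lambda) \le r$ is in hand this is immediate, so I do not anticipate a genuine obstacle.
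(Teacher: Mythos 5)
Your proof is correct and follows essentially the same route as the paper: bound $l(\lambda)\le r$ via the vanishing of $S^\lambda Q$, observe that a border strip of size $2l(\lambda)-r-2$ starting in the bottom row occupies at most that many rows and hence cannot reach row $1$ (your rim-hook count $r'\le r'+c'-1=s$ is the same estimate the paper states directly), and iterate over the sequence of strip removals. No gaps.
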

\begin{proof} 
Note first of all that $l(\delta)\le r$ (otherwise $S^\delta Q=0$).  
A border strip $R$ of size $\leq 2 l(\lambda)-r-2$ starting at the first box in the bottom row of 
a partition $\lambda$ with $r\ge l(\lambda)>r/2$ has at most $2 l(\lambda)-r-2$ rows. So if
we remove $R$ then the first $l(\lambda)-(2 l(\lambda)-r-2)=-l(\lambda)+r+2\ge 2$ rows of $\lambda$ are unaffected.
%

If $S^\delta Q\otimes_k \Sym(\wedge^2 Q)$, $\delta\neq \chi$, appears
in the resolution of ${N_Q}(\chi)$ then $\chi$ is by Proposition
\ref{ref-3.2-14} obtained from $\delta$ by a sequence of border strip removals
as in the previous paragraph. Thus $\delta_1=\chi_1$ (and also $\delta_2=\chi_2$).
\end{proof}
\section{The Springer resolution}
\label{ref-4-20}
Let $\sigma:V\r V^\vee$, $\sigma+\sigma^\vee=0$  be the isomorphism corresponding to the symplectic form on $V$. 
Consider the following diagram.
\begin{equation}
\label{ref-4.1-21}
\xymatrix{
{{E}}\ar[r]^{\tilde{p}}\ar[d]_{\tilde{q}}&X\ar[d]^q\\
Z\ar[r]_p\ar[d]_\pi&Y\ar@{^(->}[r]&\wedge^2 H^\vee\\
F
}
\end{equation}
where $X=\Hom(H,V)$ is as above and
\begin{equation}
\label{ref-4.2-22}
{{Y}}=\{\psi\in \Hom(H,H^\vee)\mid \psi+\psi^\vee=0,\rk \psi\le r\}\subset \wedge^2 H^\vee\,,
\end{equation}
\[
F=\Gr(r,H):=\{\text{$r$-dimensional quotients of $H$}\}\,,
\]
\begin{equation}
  \label{ref-4.3-23}
Z=\{(\phi,Q)\mid Q\in F,\phi\in \Hom(Q,Q^\vee),\phi+\phi^\vee=0\}\,,
\end{equation}
\begin{equation}
\label{ref-4.4-24}
{{E}}=\{(\epsilon,Q)\mid Q\in F, \epsilon\in \Hom(Q,V)\}\,.
\end{equation}
If $\theta:H\r V\in X$ then $q(\theta)\in {{Y}}$ is the composition
\[
q(\theta)=[H\xrightarrow{\theta}V\xrightarrow{\sigma}V^\vee
\xrightarrow{\theta^\vee} H^\vee]\,.
\]
If $(\phi,Q)\in Z$ then $p(\phi,Q)\in {{Y}}$ is the composition
\[
p(\phi,Q)=[H\twoheadrightarrow Q\xrightarrow{\phi} Q^\vee \hookrightarrow H^\vee]\,.
\]
The map $\pi:Z\r F$ is the projection $(\phi,Q)\mapsto Q$.  If $(\epsilon,Q)\in {{E}}$ then
$\tilde{p}(\epsilon,Q)$ is the composition
\[
[H\twoheadrightarrow Q\xrightarrow{\epsilon} V]
\]
and $\tilde{q}(\epsilon,Q)$ is $(\phi,Q)$ where $\phi$ is the composition
\[
[Q\xrightarrow{\epsilon} V\xrightarrow{\sigma} V^\vee \xrightarrow{\epsilon^\vee} Q^\vee]\,.
\]
In the diagram \eqref{ref-4.2-22}, $X$, $Z$, $E$ are smooth, $p$ is a resolution of singularities
and $\pi$ and $\pi\tilde{q}$ are vector bundles. 
The
coordinate ring of $X$ is $T=\Sym_k(H\otimes_k V^\vee)$. For the other schemes in
\eqref{ref-4.1-21} we have
\begin{align*}
Y&=\Spec T^{\Sp(V)}\\
Z&=\underline{\Spec}_F \Zscr\\
{{E}}&=\underline{\Spec}_F \Escr
\end{align*}
with $\Zscr$, $\Escr$ being the sheaves of $\Oscr_F$-algebras given by
\begin{equation}
\label{ref-4.5-25}
\begin{aligned}
\Zscr&=\Sym_F(\wedge^2\Qscr)\\
\Escr&=\Sym_F(\Qscr\otimes_k V^\vee)
\end{aligned}
\end{equation}
where $\Qscr$ is the tautological quotient bundle on $F$. From \eqref{ref-4.5-25} obtain
in particular
\begin{lemma} 
\label{ref-4.1-26} If $U\subset F$ is an affine open then $\pi^{-1}(U)$ 
and $(\pi\tilde{q})^{-1}(U)$ are affine and moreover $k[\pi^{-1}(U)]=
k[(\pi\tilde{q})^{-1}(U)]^{\Sp(V)}$.
\end{lemma}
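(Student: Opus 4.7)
The plan is to derive Lemma \ref{ref-4.1-26} almost directly from the relative $\Spec$ descriptions in \eqref{ref-4.5-25}, together with the invariant-theoretic identity \eqref{ref-1.4-5} globalised to $F$.

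First I would observe that by \eqref{ref-4.5-25} the morphisms $\pi:Z\to F$ and $\pi\tilde{q}:E\to F$ are affine, being of the form $\underline{\Spec}_F\mathcal{Z}\to F$ and $\underline{\Spec}_F\mathcal{E}\to F$ with $\mathcal{Z}$, $\mathcal{E}$ quasi-coherent $\mathcal{O}_F$-algebras. Hence for an affine open $U\subset F$ the preimages $\pi^{-1}(U)$ and $(\pi\tilde{q})^{-1}(U)$ are affine, with
\[
k[\pi^{-1}(U)]=\Gamma(U,\Sym_F(\wedge^2\mathcal{Q})),\qquad k[(\pi\tilde{q})^{-1}(U)]=\Gamma(U,\Sym_F(\mathcal{Q}\otimes_k V^\vee)).
\]

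Next I would globalise \eqref{ref-1.4-5}. The isomorphism
\[
\Sym_k(Q\otimes_k V^\vee)^{\Sp(V)}\cong \Sym_k(\wedge^2 Q)
\]
of the first and second fundamental theorems is natural in the $r$-dimensional vector space $Q$, hence assembles to an isomorphism of $\mathcal{O}_F$-algebras
\[
\Sym_F(\mathcal{Q}\otimes_k V^\vee)^{\Sp(V)}\cong \Sym_F(\wedge^2\mathcal{Q}),
\]
where the $\Sp(V)$-action on the left is only through the second tensor factor (and is therefore trivial at the level of $\mathcal{O}_F$). In fact this is exactly the identification implicit in the definition of $\mathcal{Z}$ and $\mathcal{E}$ in \eqref{ref-4.5-25}.

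Finally I would take invariants of global sections. Since $\charact k=0$, the group $\Sp(V)$ is linearly reductive, so the functor $(-)^{\Sp(V)}$ is exact on quasi-coherent sheaves and commutes with $\Gamma(U,-)$ for any open $U$. Combining with the previous step,
\[
k[(\pi\tilde{q})^{-1}(U)]^{\Sp(V)}=\Gamma(U,\Sym_F(\mathcal{Q}\otimes_k V^\vee))^{\Sp(V)}=\Gamma(U,\Sym_F(\wedge^2\mathcal{Q}))=k[\pi^{-1}(U)],
\]
which is the desired identity. There is no real obstacle here; the only point that needs a moment's thought is the naturality of \eqref{ref-1.4-5} in $Q$, which guarantees that the fibrewise isomorphism sheafifies over $F$.
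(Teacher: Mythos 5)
Your proof is correct and is essentially the argument the paper intends: the paper simply states the lemma as an immediate consequence of the relative $\underline{\Spec}_F$ descriptions in \eqref{ref-4.5-25}, which is exactly your combination of affineness of $\pi$ and $\pi\tilde{q}$, the sheafified fundamental theorems \eqref{ref-1.4-5}, and linear reductivity of $\Sp(V)$.
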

Now let $Y_0$ be the open subscheme of $Y$ of those $\psi\in Y$
(see \eqref{ref-4.2-22}) which have rank exactly $r$ and put $X_0=q^{-1}(Y_0)$,
$Z_0=p^{-1}(Y_0)$, ${{E}}_0=\tilde{p}^{-1}(X_0)=\tilde{q}^{-1}(Z_0)$. Then it is easy to see that 
 $X_0\subset X=\Hom(H,V)$ is the open subscheme of those
$\theta:H\r V$ which are surjective and that
$Z_0\subset Z$ is the open subscheme of those  $(\phi,Q)\in Z$ (see
\eqref{ref-4.3-23}) where $\phi$ is an isomorphism. Finally ${{E}}_0$ is 
the open subscheme of ${{E}}$ of those $(\epsilon,Q)$ where $\epsilon$
is an isomorphism. The restricted morphisms $\tilde{q}_0:E_0\r Z_0$, $q_0:X_0\r Y_0$
are $\Sp(V)$-torsors and the restricted morphisms $\tilde{p}_0:E_0\r X_0$, $p_0:Z_0\r Y_0$
are isomorphisms.

\section{A splitting functor}
\label{ref-5-27}
\subsection{Preliminaries}
The idea of using splitting functors was suggested to us by Sasha Kuznetsov.
Recall that a (full) triangulated subcategory of a triangulated category is
 right admissible if the inclusion functor has a right adjoint.  
Following \cite[Def.\ 3.1]{Kuznetsov3} we say that a functor $\Phi:\Bscr\r \Ascr$ is right splitting
if $\ker \Phi$ is right admissible in~$\Bscr$,~$\Phi$ restricted to $(\ker \Phi)^\perp$
is fully faithful and finally $\im\Phi=\Phi(\ker\Phi^\perp)$ is right admissible in 
$\Ascr$. Left splitting functors are defined in a similar way.
We see that splitting functors are categorical versions of partial isometries between Hilbert space. 

According to \cite[Lem.\ 3.2, Cor.\ 3.4]{Kuznetsov3} a right splitting functor $\Phi$
has a right adjoint~$\Phi^!$ which is a left splitting functor. According to \cite[Thm 3.3(3r)]{Kuznetsov3} if
$\Phi$ is right splitting then~$\Phi$ and $\Phi^!$ induce inverse equivalences between
$\im \Phi\subset \Ascr$ and $\im \Phi^!\subset \Bscr$.
Below we will use the following criterion to verify that a certain functor is splitting.
\begin{lemmas} 
  \label{ref-5.1.1-28} Assume that $\Phi:\Bscr\r \Ascr$ is an exact
  functor between triangulated categories. Assume that $\Phi$ has a
  right adjoint $\Phi^!$ such that the composition of the counit map
  $\Phi \Phi^!\r \Id_{\Ascr}$ with $\Phi$ yields a natural isomorphism
 $\Phi \Phi^!\Phi\r \Phi$.
Then $\Phi$ is a right splitting functor.
\end{lemmas}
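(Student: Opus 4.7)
The plan is to verify the three clauses in the definition of a right splitting functor in turn: that $\ker\Phi$ is right admissible in $\Bscr$, that $\Phi$ is fully faithful on $(\ker\Phi)^\perp$, and finally that $\im\Phi$ is right admissible in $\Ascr$. The key consequence of the hypothesis is that, by the triangle identity $(\epsilon\Phi)\circ(\Phi\eta)=\id_\Phi$, the unit morphism $\Phi\eta:\Phi\to\Phi\Phi^!\Phi$ is also an isomorphism. I shall also use freely the two purely formal observations $\im\Phi^!\subseteq(\ker\Phi)^\perp$ and $(\im\Phi)^\perp=\ker\Phi^!$, both immediate from the adjunction $(\Phi,\Phi^!)$.

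For the first clause, for any $B\in\Bscr$ I would complete the unit to a triangle $K(B)\to B\xrightarrow{\eta_B}\Phi^!\Phi B$. The target lies in $(\ker\Phi)^\perp$ by the first observation, and applying $\Phi$ shows $K(B)\in\ker\Phi$ since $\Phi\eta_B$ is an isomorphism. This yields a semi-orthogonal decomposition $\Bscr=\langle(\ker\Phi)^\perp,\ker\Phi\rangle$, giving right admissibility of $\ker\Phi$ together with the crucial by-product $\ker\Phi\cap(\ker\Phi)^\perp=0$. For the second clause, for $B,B'\in(\ker\Phi)^\perp$ the canonical map $\Hom(B,B')\to\Hom(\Phi B,\Phi B')$ identifies under adjunction with $\eta_{B'}\circ(-)$; applying $\Hom(B,-)$ to the triangle for $B'$ then shows that this composition is an isomorphism, since $K(B')$ and its shifts are orthogonal to $B$.

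The delicate step is the third clause. Given $A\in\Ascr$, I would complete the counit to a triangle $\Phi\Phi^! A\xrightarrow{\epsilon_A}A\to C(A)$. Since $\Phi\Phi^! A\in\im\Phi$, right admissibility reduces to showing $C(A)\in(\im\Phi)^\perp=\ker\Phi^!$, i.e.\ that $\Phi^!\epsilon_A$ is an isomorphism. By the dual triangle identity $(\Phi^!\epsilon)\circ(\eta\Phi^!)=\id_{\Phi^!}$, this is equivalent to $\eta_{\Phi^! A}:\Phi^! A\to\Phi^!\Phi\Phi^! A$ being an isomorphism. The obstacle here is that the hypothesis only directly controls the behavior of the comonad $\Phi\Phi^!$ on objects in $\im\Phi$, whereas $A$ is arbitrary. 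The way out is a pincer argument: the cone of $\eta_{\Phi^! A}$ lies in $(\ker\Phi)^\perp$, because both its source and target lie in $\im\Phi^!\subseteq(\ker\Phi)^\perp$ and the latter is triangulated; simultaneously, it is annihilated by $\Phi$ since $\Phi\eta$ is an isomorphism, so it lies in $\ker\Phi$. The equality $\ker\Phi\cap(\ker\Phi)^\perp=0$ from the first step then forces the cone to vanish, completing the proof.
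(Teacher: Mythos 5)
Your proof is correct in substance, but it takes a genuinely different route from the paper: the paper disposes of the lemma in two lines by citing Kuznetsov's criterion (Thm 3.3(4r) of \emph{Homological projective duality}), which says that $\Phi$ is right splitting as soon as the unit-induced transformation $\Phi\to\Phi\Phi^!\Phi$ is an isomorphism, and then observes --- exactly as in your preliminary remark --- that by the triangle identity this condition is interchangeable with the counit-induced one appearing in the hypothesis. What you have written is in effect a self-contained proof of that criterion, i.e.\ of the relevant implication of Kuznetsov's theorem. That is a legitimate and more instructive alternative: the two decompositions you build, $\Bscr=\langle(\ker\Phi)^\perp,\ker\Phi\rangle$ from the cone of $\eta_B$ and $\Ascr=\langle(\im\Phi)^\perp,\im\Phi\rangle$ from the cone of $\epsilon_A$, together with the pincer $\ker\Phi\cap(\ker\Phi)^\perp=0$, are precisely the standard ingredients; the paper simply outsources them.

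One justification needs repair. In the second clause you assert $\Hom(B,K(B')[i])=0$ ``since $K(B')$ and its shifts are orthogonal to $B$''; but what you actually know from $B\in(\ker\Phi)^\perp$ (a \emph{right} orthogonal, as your other steps require) and $K(B')\in\ker\Phi$ is the vanishing $\Hom(K(B')[i],B)=0$, which points the wrong way for the long exact sequence obtained from $\Hom(B,-)$. The correct and shorter argument is already in your hands: for $B'\in(\ker\Phi)^\perp$ the object $K(B')$ also lies in $(\ker\Phi)^\perp$, since the other two terms of its defining triangle do and the right orthogonal is a triangulated subcategory; hence $K(B')\in\ker\Phi\cap(\ker\Phi)^\perp=0$ by your own by-product, so $\eta_{B'}$ is an isomorphism and full faithfulness of $\Phi$ on $(\ker\Phi)^\perp$ follows at once. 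This is the same pincer you deploy in the third clause, applied one step earlier. With that sentence replaced, the proof is complete.
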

\begin{proof} 
This is equivalent to the criterion \cite[Thm 3.3(4r)]{Kuznetsov3}. In the latter case
we start from the unit map  $\Id_{\Ascr} \r \Phi\Phi^! $ and
we require that the resulting $\Phi\r \Phi \Phi^!\Phi$ is an isomorphism.
As the composition $\Phi\r \Phi \Phi^!\Phi
\r \Phi$ is the identity, it  follows that if one of these maps is an isomorphism then so
is the other.
\end{proof}
\subsection{The functor}
\label{ref-5.2-29}
  The diagram
\eqref{ref-4.1-21}
may be transformed into a diagram of quotient stacks 
\[
\xymatrix{
{{E/\Sp(V)}}\ar[r]^{\tilde{p}_s}\ar[d]_{\tilde{q}_s}&X/\Sp(V)\ar[d]^{q_s}\\
Z\ar[r]_p\ar[d]_\pi&Y\ar@{^(->}[r]&\wedge^2 H^\vee\\
F
}
\]
which is compatible with the natural maps $E\r E/\Sp(V)$, $X\r
X/\Sp(V)$.  This means in particular that $L\tilde{q}^\ast_s$, $Lq^\ast_s$,
$R\tilde{p}_{s,\ast}$, $L\tilde{p}_{s}^\ast$, $\tilde{p}^!_{s,\ast}$ may be computed like
their non-stacky counterparts. We will use this without further comment. 

We define the functor $\Phi$ as the composition
\[
\Phi:\Dscr(Z)\xrightarrow{L\tilde{q}^\ast_s} 
\Dscr(E/\Sp(V))
\xrightarrow{R\tilde{p}_{s,\ast}} \Dscr(X/\Sp(V))
\]
The functor $\Phi$ has a right adjoint $\Phi^!$ given by the composition
\[
\Phi^!:\Dscr(X/\Sp(V))\xrightarrow{\tilde{p}^!_s}  \Dscr(E/\Sp(V))\xrightarrow{R\tilde{q}_{s\ast}} \Dscr(Z)
\]
where
$
\tilde{p}_s^!=\omega_{E/X}\otimes_E L\tilde{p}_s^\ast(-)
$
and $\tilde{q}_{s\ast}$ is given by taking $\Sp(V)$-invariants. From Lemma \ref{ref-4.1-26} and the
fact that $\Sp(V)$ is reductive it follows that $\tilde{q}_{s\ast}$ is an exact functor.
\begin{theorems} 
\label{ref-5.2.1-30}
\begin{enumerate}
\item $\Phi$ is a right splitting functor. \label{ref-1-31}
\item $\im \Phi$ is the smallest triangulated subcategory of $\Dscr(X/\Sp(V))$ containing
$S^{\langle \lambda\rangle} V\otimes_k \Oscr_X$ for $\lambda\in B_{r/2,n-r}$. \label{ref-2-32}
\item $\im \Phi^!$ is the smallest triangulated subcategory of $\Dscr(Z)$ containing
$\Mscr_Z(\lambda)$ for $\lambda\in B_{r/2,n-r}$. \label{ref-3-33}
\item For $\lambda\in B_{r,n-r}$ we have \label{ref-4-34}
\[
\Phi(\pi^\ast((\det \Qscr)^{\otimes r-n} \otimes_F S^\lambda \Qscr))\cong S^\lambda V\otimes_k \Oscr_X\,.
\]
\item For $\lambda\in B_{r/2,n-r}$ we have \label{ref-5-35}
\[
\Phi(\Mscr_Z(\lambda))\cong S^{\langle \lambda\rangle} V\otimes_k\Oscr_X\,.
\]
\item For $\lambda\in B_{r/2,n-r}$ we have \label{ref-6-36}
\[
\Phi^!(S^{\langle \lambda\rangle} V\otimes_k\Oscr_X)\cong \Mscr_Z(\lambda).
\]
\end{enumerate}
\end{theorems}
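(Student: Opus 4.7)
The plan is to prove parts \eqref{ref-4-34}--\eqref{ref-6-36} by direct geometric computation, and then deduce parts \eqref{ref-1-31}--\eqref{ref-3-33} using Lemma~\ref{ref-5.1.1-28} together with a generation argument. For part \eqref{ref-6-36}, the functor $\tilde{q}_{s,*}$ is just $\Sp(V)$-invariants and is exact by reductivity of $\Sp(V)$ and Lemma~\ref{ref-4.1-26}, so the essential input is that $\omega_{E/X} \cong (\pi\tilde{q})^*(\det\Qscr)^{\otimes r-n}$ as an $\Sp(V)$-equivariant line bundle, which follows from the standard formulas $\omega_F = (\det H)^r \otimes (\det\Qscr)^{-n}$ on $F=\Gr(r,H)$, $\omega_{E/F} = (\pi\tilde{q})^*((\det\Qscr)^r\otimes(\det V^\vee)^r)$ for the rank $r^2$ vector bundle $\pi\tilde{q}\colon E \to F$, $\omega_X = (\det H)^r \otimes (\det V^\vee)^n$, and the $\Sp(V)$-triviality of $(\det V^\vee)^{r-n}$. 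The projection formula then gives $\Phi^!(S^{\langle\lambda\rangle}V\otimes\Oscr_X) = \pi^*(\det\Qscr)^{r-n} \otimes \tilde{q}_{s,*}(S^{\langle\lambda\rangle}V \otimes \Oscr_E)$, and the second factor locally over $F$ computes $N_Q(\lambda) = (S^{\langle\lambda\rangle}V\otimes\Sym_k(Q\otimes V^\vee))^{\Sp(V)}$, yielding $\Mscr_Z(\lambda)$.

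Part \eqref{ref-4-34} is the main technical step. Realize $E$ as a closed subscheme of $X \times F$ cut out by the kernel of $H \otimes V^\vee \otimes \Oscr_F \twoheadrightarrow \Qscr \otimes V^\vee$, producing a $\Sp(V)$-equivariant Koszul resolution of $\Oscr_E$ over $\Oscr_{X \times F}$ with terms $\wedge^i(\Kscr \otimes V^\vee) \otimes \Oscr_{X\times F}$. Factoring $\tilde{p}_s = \pr_1 \circ \iota$, one reduces $\Phi(\pi^*\Fscr) = R\tilde{p}_{s,*}((\pi\tilde{q})^*\Fscr)$ to the complex $\Oscr_X \otimes_k R\Gamma(F,\wedge^\bullet(\Kscr \otimes V^\vee) \otimes \Fscr)$. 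For $\Fscr = (\det\Qscr)^{r-n} \otimes S^\lambda\Qscr$ with $\lambda \in B_{r,n-r}$, apply Cauchy's formula $\wedge^i(\Kscr\otimes V^\vee)=\bigoplus_{|\mu|=i}S^\mu\Kscr\otimes S^{\mu'}V^\vee$ and Bott's theorem on $F$ termwise; a careful check of dominant weights on $H$ shows that the only surviving contributions assemble into $S^\lambda V \otimes \Oscr_X$ in cohomological degree zero.

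Part \eqref{ref-5-35} follows by applying $\Phi$ to the $\GL(Q)$-equivariant free resolution of $M_Q(\lambda)$ obtained from Proposition~\ref{ref-3.2-14} by tensoring with $(\det\Qscr)^{r-n}$. By Corollary~\ref{ref-3.6-19} all summands are of the form $\pi^*((\det\Qscr)^{r-n} \otimes S^\delta\Qscr)$ with $\delta_1 = \lambda_1 \le n-r$ and $l(\delta) \le r$, hence $\delta \in B_{r,n-r}$ and \eqref{ref-4-34} applies termwise; this exhibits $\Phi(\Mscr_Z(\lambda))$ as the cohomology of an explicit $\Sp(V)$-equivariant complex of $S^\delta V \otimes \Oscr_X$'s. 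Identifying this cohomology with $S^{\langle\lambda\rangle}V \otimes \Oscr_X$ concentrated in degree $0$ is the main obstacle of the argument: one can either invoke the $\GL(V) \to \Sp(V)$ branching rule in the spirit of Remark~\ref{sam2} (matching the parabolic BGG resolution of $N_Q(\lambda)$ as an irreducible highest weight $\mathfrak{sp}$-module), or test against the $\Mscr_Z(\mu)$ via the adjunction with \eqref{ref-6-36} and pin down the answer using Theorem~\ref{ref-1.2-6}\eqref{ref-3-9}.

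With \eqref{ref-4-34}--\eqref{ref-6-36} in hand, apply Lemma~\ref{ref-5.1.1-28} for \eqref{ref-1-31}: combining \eqref{ref-5-35} and \eqref{ref-6-36} gives $\Phi\Phi^!(S^{\langle\lambda\rangle}V\otimes\Oscr_X) \cong S^{\langle\lambda\rangle}V\otimes\Oscr_X$, and the counit is seen to be an isomorphism by restricting to the open subset $Z_0 \cong Y_0 \cong X_0/\Sp(V)$ on which $\Phi$ and $\Phi^!$ are mutually inverse equivalences. Propagating this through the generators $\pi^*((\det\Qscr)^{r-n} \otimes S^\lambda\Qscr)$, $\lambda \in B_{r,n-r}$ of $\Dscr(Z)$ (available from Kapranov's tilting description of $\Dscr(F)$ combined with the affine structure of $\pi$) yields $\Phi\Phi^!\Phi \xrightarrow{\sim} \Phi$, establishing \eqref{ref-1-31}. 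For \eqref{ref-2-32}, part \eqref{ref-4-34} shows $\im\Phi$ contains all $S^\lambda V \otimes \Oscr_X$ with $\lambda \in B_{r,n-r}$, and the classical $\GL(V) \to \Sp(V)$ branching rule identifies the triangulated subcategory they generate with the one spanned by $S^{\langle\mu\rangle}V \otimes \Oscr_X$ for $\mu \in B_{r/2,n-r}$; \eqref{ref-3-33} then follows from \eqref{ref-2-32} by applying the equivalence $\Phi^!\colon \im\Phi \to \im\Phi^!$ and using \eqref{ref-6-36}.
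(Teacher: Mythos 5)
Your overall architecture matches the paper's: establish \eqref{ref-4-34}--\eqref{ref-6-36} by geometric computation, then get \eqref{ref-1-31} from Lemma~\ref{ref-5.1.1-28} plus Kapranov generation of $\Dscr(F)$ and the $\GL(V)\downarrow\Sp(V)$ branching rule, and deduce \eqref{ref-2-32}, \eqref{ref-3-33}. Your treatment of \eqref{ref-6-36} is the paper's. However, there are two genuine gaps. First, in \eqref{ref-5-35} the step you yourself flag as ``the main obstacle'' --- identifying the cohomology of the complex $\cdots\r P_1(V)\otimes_k\Oscr_X\r P_0(V)\otimes_k\Oscr_X$ with $S^{\langle\lambda\rangle}V\otimes_k\Oscr_X$ in degree $0$ --- is exactly where the paper's key idea lives, and neither of your two escape routes closes it. Your second option (testing against the $\Mscr_Z(\mu)$ using Theorem~\ref{ref-1.2-6}\eqref{ref-3-9}) is circular: Theorem~\ref{ref-1.2-6} is \emph{deduced from} Theorem~\ref{ref-5.2.1-30}. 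Your first option is a pointer to machinery (Remark~\ref{sam2}) rather than an argument. The paper's Lemma~\ref{ref-5.2.4-44} resolves this with an elementary observation you are missing: by $\GL(H)$-equivariance the differentials $P_{i+1}(V)\r P_i(V)\otimes_k\Sym_k(H\otimes_k V^\vee)$ must land in the degree-zero part, so the whole complex is a \emph{constant} complex of $\Sp(V)$-representations tensored with $\Oscr_X$; exactness on the dense open $X_0$ then forces exactness of the constant complex, hence everywhere. Relatedly, your justification of the counit isomorphism in \eqref{ref-1-31} (``seen by restricting to $X_0/\Sp(V)$'') does not work as stated: an abstract isomorphism $\Phi\Phi^!(A)\cong A$ together with the counit being an isomorphism over an open set whose complement has codimension $\ge 2$ does not make the counit an isomorphism, since $\Phi\Phi^!(A)$ is a priori a complex. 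One must track the actual adjunction map, which is what Lemmas~\ref{ref-5.2.4-44} and~\ref{ref-5.2.5-48} do.

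Second, for \eqref{ref-4-34} you replace the paper's argument by a termwise Koszul/Cauchy/Bott computation of $R\Gamma(F,\wedge^\bullet(\Rscr\otimes_k V^\vee)\otimes_F(\det\Qscr)^{\otimes r-n}\otimes_F S^\lambda\Qscr)$. This route is viable (it is in the spirit of \cite{VdB100}), but the assertion that ``the only surviving contributions assemble into $S^\lambda V\otimes_k\Oscr_X$ in degree zero'' is the entire content and is not carried out; note that the degree-zero Koszul term has no global sections for general $\lambda\in B_{r,n-r}$, so the answer arises from higher cohomology of higher exterior powers recombining across the total complex --- a nontrivial bookkeeping exercise. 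The paper avoids this entirely: it shows (via \cite[Prop.\ 1.4]{BLV1000}) that $R^{>0}\tilde p_\ast$ vanishes and that $\tilde p_\ast(\omega_{E/X}\otimes_E S^\lambda((\pi\tilde q)^\ast\Qscr))$ is maximal Cohen--Macaulay, and then concludes from the tautological map $S^\lambda\epsilon$ being an isomorphism on $X_0$ with $\codim(X-X_0)\ge 2$. You should either adopt that argument or actually perform the Bott computation.
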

The proof is based on a series of lemmas. Most arguments are quite standard. See \cite{VdB100,
WeymanBook}.
\begin{lemmas}\label{ref-5.2.2} \begin{enumerate}
\item We have 
\begin{equation}
\label{ref-5.1-37}
\omega_{E/X}= (\pi\tilde{q})^\ast (\det \Qscr)^{\otimes r-n}
\end{equation}
as $\GL(H)\times\Sp(V)$-equivariant coherent sheaves.
\item Moreover
\begin{equation}
\label{ref-5.2-38}
R\tilde{p}_{s,\ast} \omega_{E/X}=\Oscr_X.
\end{equation}
\end{enumerate}
\end{lemmas}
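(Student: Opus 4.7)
For part (1), my plan is to compute $\omega_{E/X}$ via the formula $\omega_{E/X}=\omega_E\otimes(\tilde p^\ast\omega_X)^{-1}$ (valid since both $E$ and $X$ are smooth of the same dimension $nr$) and to track all $\GL(H)\times\GL(V)$-equivariant characters throughout. The key observation is that any residual $(\det V)^{\bullet}$ factor in the final answer becomes trivial upon restriction to $\Sp(V)$.

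Concretely: since $\pi\tilde q\colon E\to F$ is the total space of the rank-$r^2$ vector bundle $\Qscr^\vee\otimes_k V$, the relative canonical formula gives $\omega_{E/F}=(\pi\tilde q)^\ast\det(\Qscr^\vee\otimes_k V)^{-1}$ immediately; combined with the standard calculation of $\omega_F$ via the tautological sequence $0\to\Sscr\to H\otimes\Oscr_F\to\Qscr\to 0$ and $T_F=\Sscr^\vee\otimes\Qscr$, this yields $\omega_E$ explicitly. Then $\omega_X$ is the trivial line bundle whose $\GL(H)\times\GL(V)$-character can be read off from $T_{X,0}=H^\vee\otimes V$. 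Assembling the pieces, one gets
\[
\omega_{E/X}=(\pi\tilde q)^\ast(\det\Qscr)^{r-n}\otimes(\det V)^{n-r},
\]
and the last factor restricts to the trivial character on $\Sp(V)$, giving \eqref{ref-5.1-37}.

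For part (2), I would invoke Grothendieck duality. Since $\tilde p\colon E\to X$ is a proper morphism between smooth varieties of the same dimension, the relative dualizing complex $\tilde p_s^!\Oscr_X$ equals $\omega_{E/X}$ (with no shift). Relative duality then yields
\[
R\tilde p_{s,\ast}\omega_{E/X}=R\HHom_X(R\tilde p_{s,\ast}\Oscr_E,\Oscr_X).
\]
Since $\tilde p$ is in addition birational (an isomorphism over $X_0$) and $X$ is smooth, one has $R\tilde p_{s,\ast}\Oscr_E=\Oscr_X$, so the right-hand side collapses to $\Oscr_X$ and this is \eqref{ref-5.2-38}.

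The main technical input here is the vanishing $R^i\tilde p_{s,\ast}\Oscr_E=0$ for $i>0$. While $\tilde p_{s,\ast}\Oscr_E=\Oscr_X$ is immediate from Zariski's main theorem together with the normality of $X$, the higher vanishing in characteristic zero requires either citing a standard result (smooth varieties have rational singularities, so any smooth proper birational cover has vanishing higher direct images of the structure sheaf) or computing directly using the Grassmann-bundle structure of the fibers of $\tilde p$ — over $\theta\in X$ of rank $s$ the fiber is $\Gr(n-r,\ker\theta)$ — together with Borel-Weil-Bott along a stratification of $X$ by rank.
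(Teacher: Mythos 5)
Your proposal is correct and follows essentially the same route as the paper: part (1) is the same equivariant canonical-bundle computation (vector-bundle structure of $E$ over $F$, tautological sequence for $\omega_F$, and the observation that the leftover $(\det V)^{\otimes n-r}$ is trivial on $\Sp(V)$), arriving at the identical intermediate formula $\omega_{E/X}=(\det V)^{\otimes n-r}\otimes(\pi\tilde q)^\ast(\det\Qscr)^{\otimes r-n}$. For part (2) the paper invokes rational singularities of the smooth $X$ to get $R\tilde p_{s,\ast}\omega_E=\omega_X$ and tensors by $\omega_X^{-1}$; your Grothendieck-duality argument via $R\tilde p_{s,\ast}\Oscr_E=\Oscr_X$ is just the dual restatement of the same fact.
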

\begin{proof} 
\begin{enumerate}
\item
For clarity we will work $\GL(H)\times \GL(V)$-equivariantly.
Using the identification $E=\underline{\Spec} \Escr$ 
with $\Escr=\Sym_F(\Qscr\otimes_k V^\vee)$ (see \eqref{ref-4.5-25})
we find
that~$\omega_E$ corresponds to the sheaf of graded $\Escr$-modules 
given by
\[
\omega_{\Escr}=\omega_{F}\otimes_F\det (\Qscr\otimes_k V^\vee)\otimes_F\Escr\,.
\]
From the fact that $\Omega_F=\uHom_F(\Qscr,\Rscr)$ where $\Rscr=\ker(H\otimes_k\Oscr_F\r \Qscr)$
one computes
\[
\omega_F=(\det H)^{\otimes r}\otimes_F(\det \Qscr)^{\otimes -n}\,.
\]
We also have
\[
\det(\Qscr\otimes_k V^\vee)=(\det Q)^{\otimes r}\otimes_k (\det V)^{\otimes -r}
\]
so that ultimately we get 
\[
\omega_\Escr=(\det H)^{\otimes r}\otimes_k  (\det V)^{\otimes -r}\otimes_k (\det \Qscr)^{\otimes r-n}\otimes_F \Escr\,
\]
and hence
\[
\omega_E=(\det H)^{\otimes r}\otimes_k  (\det V)^{\otimes -r}\otimes_k(\pi\tilde{q})^\ast  (\det \Qscr)^{\otimes r-n}.
\]
One also has
$\omega_X=(\det H)^{\otimes r}\otimes_k (\det V)^{\otimes -n}\otimes_k \Oscr_X$ which yields 
\[
\omega_{E/X}=(\det V)^{\otimes n-r}\otimes_k(\pi\tilde{q})^\ast (\det \Qscr)^{\otimes r-n}.
\]
It now suffices to note that $\det V$ is a trivial $\Sp(V)$-representation.
\item It is easy to show this directly from \eqref{ref-5.1-37}
but one may also argue that~$X$, being smooth,
has rational singularities and hence $R\tilde{p}_{s,\ast}(\omega_E)=\omega_X$. Tensoring
with $\omega_X^{-1}$ yields the desired result.\qed
\end{enumerate}
\def\qed{}\end{proof}
On $E$ there is a tautological map
\[
\epsilon:(\pi\tilde{q})^\ast(\Qscr)\r V\otimes_k \Oscr_E
\]
whose fiber in a point $(\epsilon,Q)\in E$ is simply $\epsilon:Q\r V$. From this description it is clear
that $\epsilon{|}E_0$ is an isomorphism.
\begin{lemmas} 
\label{ref-5.2.3-39} Assume $\lambda\in B_{r,n-r}$. The map $S^\lambda \epsilon$
becomes an isomorphism after applying the functor $R\tilde{p}_\ast (\omega_{E/X}\otimes_E-)$.
\end{lemmas}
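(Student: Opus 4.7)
The plan is to compute both source and target of the map
\[
R\tilde p_*(\omega_{E/X}\otimes S^\lambda\epsilon)\colon R\tilde p_*(\omega_{E/X}\otimes S^\lambda (\pi\tilde q)^*\Qscr)\longrightarrow R\tilde p_*(\omega_{E/X}\otimes S^\lambda V\otimes_k\Oscr_E)
\]
as $\GL(H)\times\Sp(V)$-equivariant $\Oscr_X$-modules and then conclude that the map is an isomorphism via a generic iso plus reflexivity argument. By the projection formula and Lemma \ref{ref-5.2.2}(2), the target simplifies directly to $S^\lambda V\otimes_k\Oscr_X$.

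For the source, substitute $\omega_{E/X}=(\pi\tilde q)^*(\det\Qscr)^{\otimes r-n}$ (from Lemma \ref{ref-5.2.2}(1), up to the character $(\det V)^{\otimes n-r}$ that is trivial for $\Sp(V)$), and use that $\pi\tilde q\colon E\to F$ is affine with $(\pi\tilde q)_*\Oscr_E=\Sym_F(\Qscr\otimes_k V^\vee)$ to reduce the problem to computing
\[
R\Gamma(F,(\det\Qscr)^{\otimes r-n}\otimes S^\lambda\Qscr\otimes\Sym_F(\Qscr\otimes_k V^\vee))
\]
as a $T$-module. Decomposing $\Sym_F(\Qscr\otimes_k V^\vee)$ by Cauchy's formula as $\bigoplus_\nu S^\nu\Qscr\otimes S^\nu V^\vee$, expanding $S^\lambda\Qscr\otimes S^\nu\Qscr$ via Littlewood-Richardson, and applying Borel-Weil-Bott on $F=\Gr(r,H)$ (which gives that $R\Gamma(F,(\det\Qscr)^{\otimes r-n}\otimes S^\kappa\Qscr)$ equals $S^{\kappa-(n-r)^r}H$ in degree zero when $\kappa_r\geq n-r$, and vanishes otherwise) produces an explicit $\GL(H)\times\Sp(V)$-equivariant description of the source.

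A direct character comparison between source and target then reduces to a classical Littlewood-Richardson symmetry of the form $c^{\alpha+(n-r)^r}_{\lambda,\nu}=c^\lambda_{\mu,\alpha}$, where $\mu$ and $\nu$ are related by complementation in the $r\times(n-r)$-rectangle; this, together with the identification $(\det V)^{\otimes n-r}\otimes S^\nu V^\vee\cong S^{((n-r)-\nu_r,\ldots,(n-r)-\nu_1)}V$, matches the two sides as equivariant $\Oscr_X$-modules. Since the induced map is an isomorphism on the open locus $X_0\subset X$ (where $\tilde p$ and $\epsilon$ are both isomorphisms), and $X\setminus X_0$ has codimension $\geq 2$, the abstract character matching forces both sides to be reflexive and the map to be an isomorphism on all of $X$.

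The main obstacle is the Littlewood-Richardson identity used to match characters, which is a classical but nontrivial symmetry arising from complementation in the $r\times(n-r)$-rectangle, together with the verification that the iso on $X_0$ extends to all of $X$ via reflexivity of both sides.
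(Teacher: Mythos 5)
Your computation of the target as $S^\lambda V\otimes_k\Oscr_X$ and your appeal to the generic isomorphism over $X_0$ together with $\codim(X\setminus X_0)\ge 2$ both agree with the paper, but the concluding inference has a genuine gap. Equality of characters (Hilbert series) of source and target, combined with the map being an isomorphism on $X_0$, does \emph{not} force the map to be an isomorphism, and it certainly does not ``force both sides to be reflexive'': for a domain $A$ and $0\ne I\subsetneq A$, the map $\diag(1,0):A\oplus A/I\to A\oplus A/I$ is generically an isomorphism between modules with identical Hilbert series, yet is neither injective nor surjective. To close the argument along your lines you must in addition know that the source $\tilde p_*(\omega_{E/X}\otimes_E S^\lambda(\pi\tilde q)^*\Qscr)$ is torsion-free (then the kernel, being torsion inside a torsion-free module, vanishes, and the cokernel vanishes because its Hilbert series is zero); this does hold, since $\tilde p$ is a surjective generically finite projective morphism from the irreducible variety $E$, but it is exactly the missing ingredient. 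Two smaller points: Bott's theorem does not say that $R\Gamma(F,(\det\Qscr)^{\otimes r-n}\otimes S^\kappa\Qscr)$ ``vanishes otherwise'' --- a regular non-dominant weight contributes cohomology in a \emph{higher} degree --- so you must check that the $\rho$-shifted first block $(\kappa_i+r-i)_i$ consists of nonnegative integers and hence any non-dominant weight here collides with the block $(n-r-1,\dots,0)$ and is singular (this is true, and is in substance the vanishing result \cite[Prop.\ 1.4]{BLV1000} the paper cites). Also, the Littlewood--Richardson symmetry you invoke via complementation in the $r\times(n-r)$ rectangle only applies to $\nu$ contained in that rectangle, whereas the Cauchy decomposition of $\Sym_F(\Qscr\otimes_k V^\vee)$ produces all $\nu$ with at most $r$ rows, so the character comparison needs more care than indicated.

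For contrast, the paper's proof avoids all character computations: after the vanishing of higher direct images it shows the source is maximal Cohen--Macaulay by checking, via duality, that $\Ext^i_E(S^\lambda((\pi\tilde q)^*\Qscr),\Oscr_E)=H^i(F,(S^\lambda\Qscr)^\vee\otimes_F\Escr)$ vanishes for $i>0$ (again by \cite[Prop.\ 1.4]{BLV1000}). A map of reflexive (in particular MCM) sheaves on the smooth variety $X$ that is an isomorphism off a subset of codimension $\ge 2$ is automatically an isomorphism, with no need to identify the two sides beforehand. If you want to keep your explicit route, replace ``character matching forces reflexivity'' by the torsion-freeness argument above, or prove MCM-ness as the paper does.
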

\begin{proof} 
By \eqref{ref-5.2-38} we have
\begin{equation}
\label{ref-5.3-40}
R\tilde{p}_\ast(\omega_{E/X}\otimes_E(S^\lambda V\otimes_k \Oscr_E))=
S^\lambda V\otimes_k  R\tilde{p}_\ast(\omega_{E/X})=
S^\lambda V\otimes_k  \Oscr_X.
\end{equation}
When viewed as $\Sym_k(H\otimes_k V^\vee)$-module 
$R^i\tilde{p}_\ast(\omega_{E/X}\otimes_E S^\lambda((\pi\tilde{q}_s)^\ast(\Qscr)))$ is given by
\begin{equation}
\label{ref-5.4-41}
H^i(F,S^\lambda \Qscr \otimes_F (\det \Qscr)^{\otimes r-n}\otimes_F \Escr)
\end{equation}
(using \eqref{ref-5.1-37}).  It follows from \cite[Prop.\ 1.4]{BLV1000} that
\eqref{ref-5.4-41} is zero for $i>0$. So
$R^i\tilde{p}_\ast(\omega_{E/X}\otimes_E
S^\lambda((\pi\tilde{q}_s)^\ast(\Qscr)))=0$ for $i>0$. We now consider $i=0$.  We claim that $\tilde{p}_\ast(\omega_{E/X}\otimes_E
S^\lambda((\pi\tilde{q}_s)^\ast(\Qscr))$ is maximal Cohen-Macaulay. 
  To this end we have to show that
$\RHom_X(\tilde{p}_\ast(\omega_{E/X}\otimes_E
S^\lambda((\pi\tilde{q}_s)^\ast(\Qscr)),\Oscr_X)$ has no higher
cohomology or equivalently $\Ext^i_E(
S^\lambda((\pi\tilde{q})^\ast(\Qscr)),\Oscr_E)=0$ for $i>0$. In other
words we should have
\[
H^i(F,(S^\lambda \Qscr)^\vee \otimes_F \Escr)=0
\]
for $i>0$. This follows again from \cite[Prop.\ 1.4]{BLV1000}. 

Combining this with \eqref{ref-5.3-40} we see that
$R\tilde{p}_\ast(\omega_{E/X}\otimes_X S^\lambda\epsilon)$ is a map
between maximal Cohen-Macaulay $\Oscr_X$-modules. Since this map is an
isomorphism on $X_0$ and $\codim (X-X_0)\ge 2$ we conclude that
$R\tilde{p}_\ast(\omega_{E/X}\otimes_X S^\lambda\epsilon)$ is indeed an isomorphism.
\end{proof}
Put $\Nscr_Z(\lambda):=\widetilde{N_Q(\lambda)}$ where the notation $\tilde{?}$
was introduced in the introduction and $N_Q(\lambda)$ was introduced in
\S\ref{ref-3-12}.
From Lemma \ref{ref-4.1-26} 
we deduce
\begin{equation}
\label{ref-5.5-42}
\Nscr_Z(\lambda)\cong R\tilde{q}_{s,\ast}(S^{\langle \lambda\rangle}V\otimes_k \Oscr_E)
\end{equation}
so that by adjunction we get a map
\begin{equation}
\label{ref-5.6-43}
L\tilde{q}^\ast_s\Nscr_Z(\lambda)\r S^{\langle \lambda\rangle}V\otimes_k \Oscr_E.
\end{equation}
\begin{lemmas} 
\label{ref-5.2.4-44} Assume $\lambda\in B_{r/2,n-r}$. The map \eqref{ref-5.6-43} becomes an isomorphism after applying
the functor $R\tilde{p}_{s,\ast}(\omega_{E/X}\otimes_E-)$.
\end{lemmas}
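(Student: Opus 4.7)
Plan: The approach is to resolve $\Nscr_Z(\lambda)$ using Proposition \ref{ref-3.2-14}, apply Lemma \ref{ref-5.2.3-39} term-by-term, and then extend the resulting isomorphism from the locus of full-rank homomorphisms to all of $X$ via a Cohen--Macaulay-plus-codimension argument. Concretely, sheafifying the free resolution $F_\bullet\to N_Q(\lambda)$ produces a $\GL(H)$-equivariant locally free resolution $\Fscr_\bullet\to\Nscr_Z(\lambda)$ on $Z$ whose terms are direct sums of $\pi^\ast S^\delta\Qscr$. By Corollary \ref{ref-3.6-19} combined with $\lambda\in B_{r/2,n-r}$, every such $\delta$ satisfies $\delta_1=\lambda_1\le n-r$; since $l(\delta)\le r$ (else $S^\delta\Qscr=0$), we have $\delta\in B_{r,n-r}$. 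Flatness of the $\Fscr_t$ gives $\tilde{q}^\ast\Fscr_\bullet\simeq L\tilde{q}^\ast_s\Nscr_Z(\lambda)$ with typical term $(\pi\tilde{q})^\ast S^\delta\Qscr$. Applying $R\tilde{p}_{s,\ast}(\omega_{E/X}\otimes_E -)$ term-wise, Lemma \ref{ref-5.2.3-39} identifies each term with the free $\Oscr_X$-module $S^\delta V\otimes_k\Oscr_X$ concentrated in degree $0$. Since the higher direct images vanish term-by-term, the derived pushforward is represented by the naive term-wise complex $C_\bullet$, a bounded complex of free $\Oscr_X$-modules, and \eqref{ref-5.6-43} (after the functor, using Lemma \ref{ref-5.2.2}(2) on the right) becomes a map $\phi\colon C_\bullet\to S^{\langle\lambda\rangle}V\otimes_k\Oscr_X$.

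Next I verify $\phi$ is a quasi-isomorphism on the open subscheme $X_0\subset X$ of surjective $\theta\colon H\twoheadrightarrow V$, whose complement has codimension $n-r+1\ge 2$. On $X_0$, $\tilde{p}_0$ is an isomorphism (so $\omega_{E_0/X_0}\cong\Oscr_{E_0}$) and $\tilde{q}_0\colon E_0\to Z_0$ is a principal $\Sp(V)$-bundle. Since $\Sp(V)$ is reductive, the counit $L\tilde{q}_{0,s}^\ast\tilde{q}_{0,s,\ast}(S^{\langle\lambda\rangle}V\otimes\Oscr_{E_0})\to S^{\langle\lambda\rangle}V\otimes\Oscr_{E_0}$ is an isomorphism, so \eqref{ref-5.6-43} is already an iso on $E_0$; pushing forward yields that $\phi|_{X_0}$ is a quasi-iso.

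The main obstacle is extending this to all of $X$. I would establish that $R\tilde{p}_{s,\ast}(\omega_{E/X}\otimes_E L\tilde{q}^\ast_s\Nscr_Z(\lambda))$ is a maximal Cohen--Macaulay $\Oscr_X$-module concentrated in degree $0$. Following the pattern of the proof of Lemma \ref{ref-5.2.3-39}, this reduces via Grothendieck--Serre duality along $\tilde{p}$ to the $\Ext$-vanishing $\Ext^{>0}_E((\pi\tilde{q})^\ast S^\delta\Qscr,\Oscr_E)=H^{>0}(F,(S^\delta\Qscr)^\vee\otimes_F\Escr)=0$, supplied by \cite[Prop.\ 1.4]{BLV1000} and inherited termwise through the resolution. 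Once both the source and target of $\phi$ are MCM on the smooth variety $X$ and agree on $X_0$, the standard depth argument (kernel and cokernel of $\phi$ are supported in codimension $\ge 2$ but embed into or quotient out of MCMs, forcing them to vanish) gives that $\phi$ is an isomorphism globally.
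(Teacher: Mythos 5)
Your setup is sound and matches the paper's: the resolution from Proposition \ref{ref-3.2-14} together with Corollary \ref{ref-3.6-19} puts all Schur weights $\delta$ in $B_{r,n-r}$, the term-by-term vanishing from Lemma \ref{ref-5.2.3-39} lets you represent $R\tilde{p}_{s,\ast}(\omega_{E/X}\otimes_E L\tilde{q}^\ast_s\Nscr_Z(\lambda))$ by the naive complex $C_\bullet$ with free terms $P_t(V)\otimes_k\Oscr_X$, and the torsor argument on $E_0$ gives the quasi-isomorphism over $X_0$. The gap is in the final step. Knowing that each term of $C_\bullet$ is free (equivalently, the termwise $\Ext$-vanishing from \cite[Prop.\ 1.4]{BLV1000}) only tells you that $C_\bullet$ represents the object and $C_\bullet^\vee$ represents its dual; it does \emph{not} tell you that the hypercohomology is concentrated in degree $0$, i.e.\ that $H_t(C_\bullet)=0$ for $t>0$ on all of $X$. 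That vanishing is precisely what has to be proved, and it is not ``inherited termwise'': a bounded complex of free modules can perfectly well be exact in positive degrees off a codimension-$2$ locus and still have nonzero higher homology supported on that locus (e.g.\ splice a Koszul complex on $\AA^2$ above a free module in degree $0$). A codimension count cannot rescue this either, since the length of the resolution of $N_Q(\lambda)$ (cf.\ Example \ref{ref-3.4-16}) can exceed $\codim(X\setminus X_0)=n-r+1$, so the Peskine--Szpiro acyclicity lemma does not apply in general. Your concluding MCM-plus-codimension argument is fine for identifying $H_0$, but only once the higher homology is already known to vanish.

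The paper closes this gap with an equivariance argument that your proposal is missing: the differentials of $C_\bullet$ are $\GL(H)\times\Sp(V)$-equivariant maps $P_{t+1}(V)\otimes_k\Oscr_X\to P_t(V)\otimes_k\Oscr_X$, i.e.\ elements of $\bigl(P_{t+1}(V)^\vee\otimes_k P_t(V)\otimes_k\Sym_k(H\otimes_k V^\vee)\bigr)^{\GL(H)\times\Sp(V)}$, and since $\GL(H)$ acts trivially on the $P_i(V)$ while every positive-degree piece of $\Sym_k(H\otimes_k V^\vee)$ carries nontrivial $\GL(H)$-representations, the invariants force the differentials into degree $0$. Hence $C_\bullet$ is a complex of \emph{vector spaces} tensored with $\Oscr_X$, and exactness in positive degrees can be checked after restriction to $X_0$, where it holds. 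You should replace your MCM step with this (or an equivalent) argument.
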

\begin{proof}  Note that \eqref{ref-5.6-43}
is an isomorphism on $E_0$ since $E_0\r Z_0$ is an $\Sp(V)$-torsor
and so $L\tilde{q}^\ast_s$
and $R\tilde{q}_{s,\ast}$ define inverse equivalences between $\Dscr(E_0/\Sp(V))$
and $\Dscr(Z_0)$.

By Corollary \ref{ref-3.6-19} we have a $\GL(H)$-equivariant resolution
\[
\cdots \r P_1(\pi^\ast\Qscr)\r P_0(\pi^\ast\Qscr)\r \Nscr_Z(\lambda)\r 0
\]
where the $P_i$ are polynomial functors which are finite sums of Schur functors $S^\chi$ with $\chi\in B_{r,n-r}$. 
 It follows that the cone of \eqref{ref-5.6-43} is described by a $\GL(H)\times \Sp(V)$-equivariant complex of the
form
\begin{equation}
\label{ref-5.7-45}
\cdots \r P_1((\pi\tilde{q})^\ast\Qscr)\r P_0((\pi\tilde{q})^\ast\Qscr)\r  S^{\langle \lambda\rangle}V\otimes_k \Oscr_E\r 0
\end{equation}
and moreover this complex is exact when restricted to $E_0$. Using Lemma \ref{ref-5.2.3-39} 
and \eqref{ref-5.2-38} applying $R\tilde{p}_{\ast}(\omega_{E/X}\otimes_X-)$ to \eqref{ref-5.7-45} yields a $\GL(H)\times \Sp(V)$-equivariant complex on $X$
\begin{equation}
\label{ref-5.8-46}
\cdots \r P_1(V)\otimes_k \Oscr_X \r P_0(V)\otimes_k \Oscr_X\r  S^{\langle \lambda\rangle}V\otimes_k \Oscr_X\r 0
\end{equation}
This complex is exact on $X_0$ (since $X_0\cong E_0$) but we must prove it is exact on $X$.
The morphisms in \eqref{ref-5.8-46} are determined by $\GL(H)\times\Sp(V)$-equivariant maps
\begin{align*}
P_{i+1}(V)&\r P_i(V)\otimes_k \Sym_k(H\otimes_k V^\vee)\\
P_{0}(V)&\r S^{\langle\lambda\rangle}(V)\otimes_k \Sym_k(H\otimes_k V^\vee)
\end{align*}
which by $\GL(H)$-equivariance must necessarily be obtained from $\Sp(V)$-equivariant maps
\begin{align*}
P_{i+1}(V)&\r P_i(V)\\
P_{0}(V)&\r S^{\langle\lambda\rangle}(V)
\end{align*}
We conclude that \eqref{ref-5.8-46} is of the form
\begin{equation}
\label{ref-5.9-47}
(\cdots \r P_2(V)\r P_1(V)\r P_0(V)\r S^{\langle\lambda\rangle} V\r 0)\otimes_k \Oscr_X
\end{equation}
in a way which is compatible with $\GL(H)\times \Sp(V)$-actions. Restricting to $X_0$  we see that
\[
\cdots \r P_2(V)\r P_1(V)\r P_0(V)\r S^{\langle\lambda\rangle} V\r 0
\]
must be exact. But then \eqref{ref-5.9-47} is also exact and hence so is \eqref{ref-5.8-46}.
\end{proof}
\begin{lemmas} 
\label{ref-5.2.5-48} Let $\lambda\in B_{r/2,n-r}$.
The counit map
\[
\Phi\Phi^!(S^{\langle \lambda\rangle}V\otimes_k \Oscr_X)
\r S^{\langle \lambda\rangle}V\otimes_k \Oscr_X
\]
is an isomorphism.
\end{lemmas}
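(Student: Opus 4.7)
The plan is to unfold $\Phi^!$ and $\Phi$ applied successively to $A := S^{\langle \lambda\rangle} V \otimes_k \Oscr_X$, identify the resulting counit with the maps studied in the preceding two lemmas, and read off that it is an isomorphism.

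First, I would compute $\Phi^!(A)$. Using the definition $\tilde{p}_s^!(-) = \omega_{E/X} \otimes_E L\tilde{p}_s^\ast(-)$ together with the formula $\omega_{E/X} = \tilde{q}^\ast \pi^\ast(\det\Qscr)^{\otimes r-n}$ from Lemma \ref{ref-5.2.2}(1), together with the projection formula for $\tilde{q}_s$, one obtains
\[
\Phi^!(A) = R\tilde{q}_{s,\ast}\bigl(\omega_{E/X} \otimes_E (S^{\langle \lambda\rangle} V \otimes_k \Oscr_E)\bigr) \cong \pi^\ast(\det\Qscr)^{\otimes r-n} \otimes_Z \Nscr_Z(\lambda) = \Mscr_Z(\lambda),
\]
where the middle isomorphism uses \eqref{ref-5.5-42}. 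Next, I would apply $\Phi$; since $L\tilde{q}_s^\ast$ sends the $\pi^\ast(\det\Qscr)^{\otimes r-n}$ factor to $\omega_{E/X}$, one has $L\tilde{q}_s^\ast \Mscr_Z(\lambda) = \omega_{E/X} \otimes_E L\tilde{q}_s^\ast \Nscr_Z(\lambda)$ and hence $\Phi\Phi^!(A) = R\tilde{p}_{s,\ast}\bigl(\omega_{E/X} \otimes_E L\tilde{q}_s^\ast \Nscr_Z(\lambda)\bigr)$.

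Second, I would identify the counit. The counit of the composite adjunction $\Phi \dashv \Phi^!$ factors as the counit of $L\tilde{q}_s^\ast \dashv R\tilde{q}_{s,\ast}$, whiskered with $R\tilde{p}_{s,\ast}$ and $\tilde{p}_s^!$, followed by the counit of $R\tilde{p}_{s,\ast} \dashv \tilde{p}_s^!$. Using the compatibility of the projection formula with counits, the first piece becomes $R\tilde{p}_{s,\ast}(\omega_{E/X}\otimes_E \alpha)$ where $\alpha: L\tilde{q}_s^\ast \Nscr_Z(\lambda) \to S^{\langle \lambda\rangle}V \otimes_k \Oscr_E$ is exactly the adjunction map \eqref{ref-5.6-43}. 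By Lemma \ref{ref-5.2.4-44} this map is an isomorphism after applying $R\tilde{p}_{s,\ast}(\omega_{E/X}\otimes_E -)$. The second piece, the counit of $R\tilde{p}_{s,\ast}\dashv \tilde{p}_s^!$ at $A$, boils down via the projection formula to the isomorphism $R\tilde{p}_{s,\ast}\omega_{E/X} \cong \Oscr_X$ furnished by Lemma \ref{ref-5.2.2}(2). Composing two isomorphisms gives the result.

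The main obstacle is the bookkeeping in the second paragraph: one must verify that the abstract counit of the composite adjunction, when conjugated through the projection formula isomorphism used in the first paragraph, really does produce the concrete map \eqref{ref-5.6-43} in the tensor factor. Once this naturality is in hand, the two isomorphism statements are just Lemmas \ref{ref-5.2.4-44} and \ref{ref-5.2.2}(2) applied in succession.
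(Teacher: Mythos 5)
Your proposal is correct and follows essentially the same route as the paper: compute $\tilde{p}_s^!(S^{\langle\lambda\rangle}V\otimes_k\Oscr_X)=S^{\langle\lambda\rangle}V\otimes_k\omega_{E/X}$, split the composite counit into the $\tilde{q}_s$-counit (which, after twisting out the line bundle $\omega_{E/X}$ via \eqref{ref-5.1-37}, is handled by Lemma \ref{ref-5.2.4-44}) and the $\tilde{p}_s$-counit (handled by \eqref{ref-5.2-38}). The bookkeeping you flag is exactly the step the paper also treats as routine naturality.
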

\begin{proof} We have
\[
\tilde{p}_{s}^!(S^{\langle \lambda\rangle}V\otimes_k \Oscr_X)
=S^{\langle \lambda\rangle}V\otimes_k \omega_{E/X}.
\]
Hence we have to show that the counit map
\[
L\tilde{q}_s^\ast R{\tilde{q}}_{s,\ast} (S^{\langle \lambda\rangle}V\otimes_k \omega_{E/X})
\r S^{\langle \lambda\rangle}V\otimes_k \omega_{E/X}
\]
becomes an isomorphism after applying $R\tilde{p}_{s,\ast}$. 

Using \eqref{ref-5.1-37} we see that it is sufficient to prove that
\[
L\tilde{q}_s^\ast R{\tilde{q}}_{s,\ast} (S^{\langle \lambda\rangle}V\otimes_k \Oscr_E)
\r S^{\langle \lambda\rangle}V\otimes_k \Oscr_E
\]
becomes an isomorphism after applying $R\tilde{p}_{s,\ast}(\omega_{E/X}\otimes_X-)$.
This is precisely Lemma \ref{ref-5.2.4-44}.
\end{proof}
\begin{proof}[Proof of Theorem \ref{ref-5.2.1-30}]
\begin{enumerate}
\item[\eqref{ref-6-36}]
We have by \eqref{ref-5.1-37} and \eqref{ref-5.5-42}
\begin{align*}
\Phi^!(S^{\langle \lambda\rangle} V\otimes_k\Oscr_X)&=R\tilde{q}_{s,\ast}(\omega_{E/X}\otimes_E (S^{\langle \lambda\rangle} V\otimes_k\Oscr_E))\\
&=\pi^\ast(\det \Qscr)^{\otimes r-n} \otimes_Z \Nscr_Z(\lambda)\\
&=\Mscr_Z(\lambda).
\end{align*}
\item[\eqref{ref-5-35}]
Using \eqref{ref-5.1-37}\eqref{ref-5.2-38} and Lemma \ref{ref-5.2.4-44} we have
\begin{align*}
\Phi(\Mscr_Z(\lambda))&=R\tilde{p}_{s,\ast}(\omega_{E/X} \otimes_E L\tilde{q}_{s}^\ast \Nscr_Z(\lambda))\\
&=S^{\langle\lambda\rangle} V\otimes_k R\tilde{p}_{s,\ast}\omega_{E/X}\\
&=S^{\langle\lambda\rangle} V\otimes_k \Oscr_X.
\end{align*}
\item[\eqref{ref-4-34}] Using \eqref{ref-5.1-37}\eqref{ref-5.2-38} and Lemma \ref{ref-5.2.3-39} we have 
\begin{align*}
\Phi(\pi^\ast((\det \Qscr)^{\otimes r-n} \otimes_F S^\lambda \Qscr))&=R\tilde{p}_{s,\ast}(\omega_{E/X}\otimes_E L(\pi\tilde{q}_s)^\ast(S^\lambda \Qscr))\\
&=S^\lambda V\otimes_k R\tilde{p}_{s,\ast}(\omega_{E/X})\\
&=S^\lambda V\otimes_k \Oscr_X.
\end{align*}
\item[\eqref{ref-1-31}] We use Lemma \ref{ref-5.1.1-28}. So we have to prove that the counit map $\Phi\Phi^{!}(A)\r A$ is an isomorphism
for every object of the form $A=\Phi(B)$ with $B\in \Dscr(Z)$. It is clearly sufficient to check this for $B$ running through a set of
generators of $\Dscr(Z)$. The sheaves $(\det \Qscr)^{\otimes r-n}\otimes_F S^\lambda\Qscr$ 
for $\lambda\in B_{r,n-r}$ generate $\Dscr(F)$ \cite{Kapranov3}. Hence since $Z\r F$
is affine it follows that the sheaves $\pi^\ast((\det \Qscr)^{\otimes r-n}\otimes_F S^\lambda\Qscr)$  generate
$\Dscr(Z)$. By \eqref{ref-4-34} we have $\Phi(\pi^\ast((\det \Qscr)^{\otimes r-n} \otimes_F S^\lambda \Qscr))
\cong S^\lambda V\otimes_k \Oscr_X$ and $S^\lambda V$ is a sum of $S^{\langle\mu\rangle}V$ with $\mu_1\le\lambda_1$,
for example by careful inspection of the formula \cite[\S2.4.2]{HTW}. It now suffices to invoke Lemma \ref{ref-5.2.5-48}
(or, with a bit of handwaving, \eqref{ref-5-35}\eqref{ref-6-36}).
\item[\eqref{ref-2-32}] This has been proved as part of \eqref{ref-1-31}.
\item[\eqref{ref-3-33}] By \cite[Thm 3.3(3r)]{Kuznetsov3} it follows that $\im \Phi^!=\Phi^!(\im \Phi)$. It now suffices to invoke \eqref{ref-2-32}\eqref{ref-6-36}.\qed
\end{enumerate}
\def\qed{}\end{proof}
\begin{proof}[Proof of  Theorem \ref{ref-1.2-6}]
\begin{enumerate}
\item Since by Theorem \ref{ref-5.2.1-30}(6) $\Mscr_Z(\lambda),\Mscr_Z(\mu)\in \im \Phi^!$ we have by Theorem \ref{ref-5.2.1-30}(5)
\begin{align*}
\Ext^i_Z(\Mscr_Z(\lambda),\Mscr_Z(\mu))&=\Ext^i_{X/\Sp(V)}(\Phi(\Mscr_Z(\lambda)),\Phi(\Mscr_Z(\mu)))\\
&=\Ext^i_{X/\Sp(V)}(S^{\langle \lambda\rangle}V\otimes_k \Oscr_X,S^{\langle \mu\rangle}V\otimes_k \Oscr_X)
\end{align*}
which is zero for $i>0$ (since $\Sp(V)$ is reductive). Note that we also find
\begin{equation}
\label{ref-5.10-49}
\begin{aligned}
\Hom_Z(\Mscr_Z(\lambda),\Mscr_Z(\mu))&= \Hom_X(S^{\langle \lambda\rangle}V\otimes_k \Oscr_X,S^{\langle \mu\rangle}V\otimes_k \Oscr_X)^{\Sp(V)}\\
&\cong\Hom_R(M(\lambda),M(\mu))
\end{aligned}
\end{equation}
by \cite[Lemma 4.1.3]{SpenkoVdB}. 
\item We have by \eqref{ref-5.1-37}\eqref{ref-5.2-38}
\begin{align*}
Rp_\ast \Mscr_Z(\lambda)&=Rp_\ast R\tilde{q}_{s,\ast}(\omega_{E/X}\otimes_k S^{\langle \lambda\rangle }V)\\
&=Rq_{s,\ast} R\tilde{p}_{s,\ast}(\omega_{E/X}\otimes_k S^{\langle \lambda\rangle }V)\\
&=Rq_{s,\ast}( S^{\langle \lambda\rangle }V\otimes_k R\tilde{p}_{s,\ast}(\omega_{E/X}))\\
&=Rq_{s,\ast} (S^{\langle \lambda\rangle }V\otimes_k \Oscr_X)\\
&=(S^{\langle \lambda\rangle }V\otimes_k \Oscr_X)^{\Sp(V)}
\end{align*}
Taking global sections yields what we want. 
\item By \eqref{ref-5.10-49} and \eqref{ref-1.5-8} both sides of \eqref{ref-1.6-10} are reflexive $R$-modules.
Since~$p_\ast$ induces an isomorphism on $Y_0$ between both sides of \eqref{ref-1.6-10} (viewed as sheaves on $Y$)
and $\codim(Y-Y_0)\ge 2$ \eqref{ref-1.6-10} must be an isomorphism. \qed
\end{enumerate}
\def\qed{}\end{proof}

\section{Symmetric matrices}\label{symsec}
In this section we present modification needed to treat determinantal varieties of symmetric matrices. 

We keep the same notation as in the introduction, but now we equip $V$ with a symmetric bilinear form 
so that $r=\dim V$ does not need to be even, $Y$ is the variety of $n\times n$ symmetric matrices of rank 
$\leq r$, $G=O(V)$, 
while 
 $X=\Hom(H,V)$, $T=\Sym_k(H\otimes V^\vee)$ remain the same, put $R=T^{O(V)}$. By the fundamental theorems for the orthogonal
 group we have $Y\cong \Spec R$.

 If $\chi$ is a partition with $\chi^t_1+\chi^t_2\leq r$, where
 $\chi^t$ denotes the transpose partition, we write $S^{[\chi]}V$
 for the corresponding irreducible representation of $O(V)$ (see
 \cite[\S 19.5]{FH}), and call such a partition admissible. By
 $\chi^\sigma$ we denote the conjugate partition of $\chi$; i.e.,
 $(\chi^\sigma)^t_1=r-\chi^t_1$, $(\chi^\sigma)^t_k=\chi^t_k$ for $k>1$. Note that either $l(\chi)\le r/2$ or 
$l(\chi^\sigma)\le r/2$.  We have $S^{[\lambda^\sigma]}V=\det V\otimes_k S^{[\lambda]} V$ \cite[\S6.6, Lemma 2]{Procesi3}.

In \cite{SpenkoVdB} a non-commutative resolution of $R$ has been constructed, which is crepant in case $n$ and $r$ have opposite parity.
Let $B_{k,l}^{a}$ denote the set admissible partitions in $B_{k,l}$. 
We put 
\begin{equation}
\label{ref-6-1}
M=\bigoplus_{{{\chi}}\in B_{r,\lfloor (n-r)/2\rfloor +1 }^{a}} M({{\chi}}),
\end{equation}
where $M(\chi)=(S^{[\chi]} V\otimes_k T)^{O(V)}$
and write $\Lambda=\End_R(M)$.

\begin{theorem}
One has $\gldim\Lambda<\infty$. 
$\Lambda$ is a non-commutative crepant resolution of $R$ 
if $n$ and $r$ have opposite parity.\footnote{In case $n$, $r$ have the same parity then  there is a \emph{twisted} non-commutative crepant resolution. We do not consider such resolutions in this paper.}
\end{theorem}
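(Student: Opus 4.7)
The plan is to adapt the invariant-theoretic approach of \cite{SpenkoVdB} from the symplectic case to the orthogonal case. The strategy rests on the general machinery developed there, which produces non-commutative (crepant) resolutions for modules of covariants under a reductive group acting on a representation, provided one chooses a generic character $\Delta$ and assembles the corresponding GIT chamber of representations. Here the group is $G=O(V)$, the representation is $X=\Hom(H,V)$, and no geometry intervenes: the proof is entirely invariant-theoretic.

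For the finite global dimension statement, I would invoke the general criterion analogous to \cite[Thm 1.5.1]{SpenkoVdB} with $\Delta=\epsilon\bar{\Sigma}$ for sufficiently small $\epsilon>0$ (cf.\ the remark after Corollary \ref{ref-1.3-11}). The main verification is that the ``allowed'' weight set produced by this generic $\Delta$ for $O(V)$ acting on $\Hom(H,V)$ coincides with $B^a_{r,\lfloor(n-r)/2\rfloor+1}$. The admissibility condition $\chi^t_1+\chi^t_2\le r$ is precisely the criterion for $S^{[\chi]}V$ to be an irreducible $O(V)$-representation, so that the sum in \eqref{ref-6-1} contains each irreducible at most once despite the identification $S^{[\chi^\sigma]}V\cong \det V\otimes_k S^{[\chi]}V$. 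The choice $\lfloor(n-r)/2\rfloor+1$ for the box width reflects the size of the GIT chamber cut out by $\Delta$. Once this combinatorial identification is made, $\gldim\Lambda<\infty$ follows directly from the abstract theorem.

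For the NCCR assertion under the parity hypothesis, each $M(\chi)$ is Cohen--Macaulay over $R$ by the standard theory of modules of covariants under a reductive group (see \cite{VdB9}). To lift this to Cohen--Macaulayness of $\Lambda$ itself, one decomposes $\Hom_R(M(\chi),M(\mu))$ into a direct sum of further modules of covariants via the branching of $(S^{[\chi]}V)^\vee\otimes_k S^{[\mu]}V$ into irreducible $O(V)$-representations. The opposite parity hypothesis guarantees that all Young diagrams appearing in this decomposition remain admissible and lie in the required box, with no $\det V$-twist intervening; this is what separates a genuine NCCR from a twisted one. Combined with the observation that modules of covariants for orthogonal invariants are CM, this gives $\Lambda\in \operatorname{CM}(R)$, so $\Lambda$ is an NCCR.

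The main obstacle is precisely the bookkeeping around $\det V$ and the involution $\chi\mapsto\chi^\sigma$. Because $S^{[\chi^\sigma]}V\cong \det V\otimes_k S^{[\chi]}V$, every equivariant calculation must track whether its output lands in admissible partitions or their conjugates. When $n$ and $r$ have opposite parity, the box $B^a_{r,\lfloor(n-r)/2\rfloor+1}$ enjoys the symmetry needed for an honest NCCR; in the equal-parity case the symmetry shifts by $\det V$, producing only a \emph{twisted} NCCR (as indicated in the footnote). Pinning down exactly where this parity dichotomy enters the Cohen--Macaulay argument, and in particular verifying that the branching of Hom-spaces respects the box $B^a_{r,\lfloor(n-r)/2\rfloor+1}$ without escaping into the conjugate half, is where the bulk of the technical work lies.
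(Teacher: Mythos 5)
First, note that the paper does not actually prove this statement: it is imported verbatim from \cite{SpenkoVdB}, where both the finiteness of global dimension and the crepancy under the parity hypothesis are established. So the only meaningful comparison is with the strategy of that reference, and at the level of strategy your outline is consistent with it: finite global dimension comes from the general criterion of \cite[Thm 1.5.1]{SpenkoVdB} applied with a generic small $\Delta=\epsilon\bar\Sigma$ (exactly as the paper's remark after Corollary \ref{ref-1.3-11} does in the symplectic case), and crepancy comes from writing $\End_R(M)$ as a sum of modules of covariants and checking Cohen--Macaulayness.

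However, two steps in your sketch are wrong as written. First, it is \emph{not} true that ``modules of covariants for orthogonal invariants are CM''; deciding which modules of covariants are Cohen--Macaulay is precisely the nontrivial problem addressed in \cite{VdB9} and \cite{SpenkoVdB}, and most of them are not. The actual argument verifies a concrete criterion for the weights occurring in $(S^{[\chi]}V)^\vee\otimes_k S^{[\mu]}V$ with $\chi,\mu$ in the prescribed index set, and this is exactly where the box width $\lfloor(n-r)/2\rfloor+1$ is used; your sketch replaces this verification with a false general principle. Second, your proposed mechanism for the parity dichotomy --- that the $O(V)$-decomposition of $(S^{[\chi]}V)^\vee\otimes_k S^{[\mu]}V$ ``stays inside the box'' exactly when $n\not\equiv r\pmod 2$ --- cannot be right: that tensor product contains constituents far outside the box for either parity. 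The parity enters instead through a $\det V$-twist (compare the factor $(\det V)^{\rn}$ appearing in $\omega_{E/X}$ and in the definition of $M_Q(\chi)$ in \S\ref{symsec}): when $n\equiv r\pmod 2$ the naturally Cohen--Macaulay object is a $\det V$-twisted module of covariants, which is why one only obtains a \emph{twisted} NCCR in that case. As a self-contained proof your proposal therefore has a genuine gap at the crepancy step; as a reading of this paper, the honest answer is simply that the proof lives in \cite{SpenkoVdB}.
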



In the symmetric case we also have an analogous Springer resolution where we adapt the definitions in the obvious way. The fundamental theorems for the orthogonal group yield $\Sym_k(Q\otimes V)^{O(V)}\cong \Sym_k(\Sym^2(Q))$. 
We only slightly change the definition of $M_Q(\chi)$, now 
\[
M_Q({{\chi}})=\det(V)^{\rn}\otimes(\det Q)^{\otimes r-n}\otimes_k (S^{[ {{\chi}}]} V\otimes_k \Sym_k(Q\otimes_k V^\vee))^{O(V)},
\]
where $\rn=0$ (resp. $\rn=1$) if $r$ and $n$ have the same (resp. opposite) parity.  
As in the skew symmetric case 
$\Mscr_{Z}({{\chi}})=\widetilde{M_Q({{\chi}})}\in \coh(\GL(H),Z)$. 

To give an analogue of Proposition \ref{ref-3.2-14} 
we need to adapt the definitions of $\tau_r(\lambda)$, $i_r(\lambda)$ 
following \cite[\S 4.4]{WeymanSam}. 
The differences (denoted by D1, D2, D3 in loc.\ cit.) are that  we remove border strips $R_\lambda$ of size $2 l(\lambda)-r$ instead of $2l(\lambda)-r-2$ and in the definition of $i_r(\lambda)$ we use $c(R_\lambda)-1$ instead of $c(R_\lambda)$. Finally 
if the total number of border strips removed is odd, then we replace the 
end result $\mu$ with $\mu^\sigma$. 

With these modifications and replacing $B_{r/2,n-r}$ by $B_{r,n-r}^{a}$ Proposition \ref{ref-3.2-14} remains true also in the symmetric case by \cite[Corollary 4.23]{WeymanSam} in the case $r$ is odd, and by 
\cite[(4.2), Theorem 4.4]{WeymanSam} in the case $r$ is even. 
Also Corollary \ref{ref-3.6-19} remains valid. 
In its proof we only need to additionally note that one can also remove a border strip of size 
$l(\lambda)$ (which affects the first row) but this can only happen in the case $\lambda=(1^r)$
and in this case, since the number of borders strips removed is odd, $\tau_r(\lambda)=(0)^\sigma=\lambda$. In particular, $\tau_r(\lambda)_1=\lambda_1$ still holds.

We now present modifications needed in statements of other results. 
\begin{itemize}
\item
In Theorem \ref{ref-1.2-6} we replace $B_{r/2,n-r}$ by  $B_{r,n-r}^{a}$. 
\item
In Theorem \ref{ref-5.2.1-30} we replace $S^{\langle \lambda\rangle}V$ by 
$S^{[ {{\lambda}}]} V$, and $B_{r/2,n-r}$ by $B_{r,n-r}^{a}$. Item (4) needs to be modified as
\[
\Phi(\pi^\ast((\det \Qscr)^{\otimes r-n} \otimes_F S^\lambda \Qscr))\cong S^\lambda V\otimes_k (\det V)^{\rn}\otimes_k \Oscr_X\,.
\]
\item
In Lemma \ref{ref-5.2.2} we have 
\[
\omega_{E/X}=(\det V)^{\rn}\otimes (\pi\tilde{q})^\ast (\det \Qscr)^{\otimes r-n}
\]
as $\GL(H)\times O(V)$-equivariant coherent sheaves.
\end{itemize}

One can easily check that the proofs obtained in the skew symmetric case also apply almost verbatim  in the symmetric case.
\appendix
\section{More on the resolution of ${N_Q}(\chi)$ in the symplectic case}
\label{ref-A-50}

We refer to Remark \ref{sam2} for an alternative approach, suggested to us by Steven Sam, towards
the results in this Appendix. We believe that our elementary arguments
are still 
of independent interest.

\medskip

Recall that a partition has Frobenius
coordinates $(a_1,\ldots,a_u;b_1,\ldots,b_u)$, $a_1>\cdots>a_u\ge 1$, $b_1>\cdots>b_u\ge 1$ if for all $i$ the box
$(i,i)$ has arm length $a_i-1$  and leg length $b_i-1$. Let ${Q_{-1}}(m)$ be the set of partitions $\chi$ with
$|\chi|=m$ whose Frobenius coordinates are of the form $(a_1,\ldots,a_u{{;}}
a_1{+}1,\ldots,a_u{+}1)$.

For partitions $\delta,\chi$ such that $l(\delta)$, $l(\chi)\le r/2$ put
$(\delta|\chi):=(\delta_1,\ldots,\delta_{r/2},\chi_1,\ldots,\chi_{r/2})$
with the latter being viewed as a weight for $\GL(Q)$.  
For $\alpha\in {Q_{-1}}(2k)$, $\beta\in {Q_{-1}}(2(k-1))$, $l(\alpha),l(\beta)\le r/2$ we put
$\beta\subset_2 \alpha$ if $\beta\subset \alpha$ and $\alpha/\beta$ does not consist
of two boxes next to each other.

For $\chi$ a partition with $l(\chi)\le r/2$ and $\chi_{r/2}\ge r/2-1$ put
\[
S_{\chi,k}=\{(\chi|\mu)\mid \mu\in {Q_{-1}}(2k), l(\mu)\le r/2\}\,.
\]
Note 
that if $\mu\in {Q_{-1}}(2k)$ and $l(\mu)\le r/2$ then $\mu_1\le r/2-1$. Hence all elements of $S_{\chi,k}$ are dominant.
For $\pi=(\chi|\alpha)\in S_{\chi,k}$, $\tau=(\chi|\beta)\in
S_{\chi,k-1}$ put $\tau\subset_2\pi$ if $\beta\subset_2\alpha$. If
$\tau\subset_2\pi$ then by the Pieri rule $S^\tau Q$ is a summand with
multiplicity one of $\wedge^2 Q\otimes_k S^\pi Q$. 
 We call any
non-zero $\GL(Q)$-equivariant map
\[
\phi_{\pi,\tau}:S^\pi Q\r \wedge^2 Q\otimes_k S^\tau Q
\]
a Pieri map.
Needless to say that  a Pieri map is only determined up to a non-zero scalar.
By analogy of \cite[\S7]{VdB100} we call a collection
of Pieri-maps $\phi_{\pi,\tau}$ such that $\tau\subset_2 \pi$  a Pieri system.
We say that two Pieri systems $\phi_{\pi,\tau}$, $\phi'_{\pi,\tau}$ are equivalent if there exist
non-zero scalars $(c_\sigma)_\sigma$ such that
\[
\phi'_{\pi,\tau}=\frac{c_\tau}{c_\pi}\phi_{\pi,\tau}\,
\]
for all $\pi,\tau$.
We will now make Proposition \ref{ref-3.2-14} more explicit
for partitions with $\chi_{r/2}\ge r/2-1$.
\begin{proposition}
\label{prop:uniqueness}
  Assume $\chi$ is a partition with $l(\chi)\le r/2$ and $\chi_{r/2}\ge r/2-1$. Then 
  $N_{Q}(\chi)$ has a $\GL(Q)$-equivariant resolution $P_\bullet$ as a $\Sym(\wedge^2
  Q)$-module
such that
\[
P_k=\bigoplus_{\pi\in S_{\chi,k}}S^{\pi}
  Q\otimes_k \Sym_k(\wedge^2 Q)
\]
and such that the differential $P_k\r P_{k-1}$ is the sum of maps for $\tau\subset_2\pi$:
\begin{equation}
\label{ref-A.1-51}
S^\pi Q\otimes_k \operatorname{Sym}(\wedge^2 Q)\xrightarrow{\phi_{\pi,\tau}\otimes 1} S^\tau Q\otimes_k \wedge^2 Q\otimes_k \operatorname{Sym}(\wedge^2 Q)
\r  S^\tau Q\otimes_k \operatorname{Sym}(\wedge^2 Q)
\end{equation}
where the $(\phi_{\pi,\tau})_{\pi,\tau}$ are Pieri maps and the last map is obtained from the multiplication $\wedge^2 Q\otimes_k \operatorname{Sym}(\wedge^2 Q)\r\operatorname{Sym}(\wedge^2 Q)$. Moreover every choice of Pieri maps such that the compositions $P_k\r P_{k-1}\r P_{k-2}$ are zero yields isomorphic 
resolutions, and the isomorphism is given by scalar multiplication.
\end{proposition}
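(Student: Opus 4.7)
The plan is to identify the summands of the resolution from Proposition~\ref{ref-3.2-14}, pin down the shape of the differentials by a degree argument combined with $\GL(Q)$-equivariance, and then deduce uniqueness from the standard rigidity of minimal free resolutions. For the summands, the hypothesis $\chi_{r/2}\ge r/2-1$ combined with the observation that any $\mu\in Q_{-1}(2k)$ with $l(\mu)\le r/2$ satisfies $\mu_1=a_1\le r/2-1\le\chi_{r/2}$ ensures that $(\chi|\mu)$ is always a partition. A direct check of the recursion defining $\tau_r$---each border strip to be removed from $(\chi|\mu)$ has size $2l-r-2$ and, since $\chi_{r/2}\ge r/2-1\ge\mu_1$, cannot reach past the $r/2$-th row---shows $\tau_r((\chi|\mu))=\chi$ with $i_r((\chi|\mu))=k$, and that these are the only partitions with this image. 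Equivalently, the Bott/geometric-method argument sketched in the commented-out passage after Example~\ref{ref-3.4-16} identifies the summands directly, via the decomposition $\wedge^k(\wedge^2\Sscr)\cong\bigoplus_{\mu\in Q_{-1}(2k)}S^\mu\Sscr$ and the fact that $(\chi|\mu)$ is already dominant for $\GL(Q)$, so the corresponding Bott cohomology concentrates in degree zero. Proposition~\ref{ref-3.2-14} then gives $P_k=\bigoplus_{\pi\in S_{\chi,k}}S^\pi Q\otimes_k\Sym_k(\wedge^2 Q)$.

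Next, grade $\Sym(\wedge^2 Q)$ by placing $\wedge^2 Q$ in degree $2$ (matching the inherited grading on $N_Q(\chi)\subset\Sym_k(Q\otimes V^\vee)^{\Sp(V)}$). The summand $S^\pi Q\subset P_k$ with $\pi=(\chi|\alpha)$, $|\alpha|=2k$, sits in internal degree $|\chi|+2k$, and the differential $P_k\to P_{k-1}$ is graded of degree zero. Its component $S^\pi Q\to S^\tau Q\otimes\Sym(\wedge^2 Q)$ is therefore forced into $S^\tau Q\otimes\Sym^1(\wedge^2 Q)=S^\tau Q\otimes\wedge^2 Q$, this being the unique summand with matching internal degree. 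By $\GL(Q)$-equivariance together with the Pieri rule $\wedge^2 Q\otimes S^\tau Q\cong\bigoplus_{\pi/\tau\text{ vertical $2$-strip}}S^\pi Q$, the space $\Hom_{\GL(Q)}(S^\pi Q,S^\tau Q\otimes\wedge^2 Q)$ is one-dimensional precisely when $\tau\subset_2\pi$ (the exclusion of horizontally adjacent boxes being automatic between $Q_{-1}$-partitions, where a direct Frobenius-coordinate analysis shows every rank-two inclusion is a vertical $2$-strip). Each differential therefore takes the claimed form \eqref{ref-A.1-51}, with each Pieri coefficient $\phi_{\pi,\tau}$ determined up to a non-zero scalar.

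For uniqueness, let $(\phi_{\pi,\tau})$ and $(\phi'_{\pi,\tau})$ be two Pieri systems with $d^2=(d')^2=0$, and assume first that both yield resolutions of $N_Q(\chi)$. Standard homological algebra provides a $\GL(Q)$-equivariant degree-preserving chain isomorphism $f:P_\bullet\to P'_\bullet$ lifting the identity on $N_Q(\chi)$. By degree matching (both $S^\pi Q$ and $S^{\pi'} Q$ sit in internal degree $|\chi|+2k$, forcing each component to land in $\Sym^0=k$) and Schur's lemma, $f_k^{\pi,\pi'}=c_\pi\cdot\delta_{\pi,\pi'}\cdot\id$ for some $c_\pi\in k^*$, and expanding $f_{k-1}\circ d=d'\circ f_k$ on each $S^\pi Q\to S^\tau Q$ component gives the desired $\phi'_{\pi,\tau}=(c_\tau/c_\pi)\phi_{\pi,\tau}$. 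That \emph{every} Pieri system with $d^2=0$ in fact produces a resolution (and not merely a complex) follows from a rigidity argument analogous to \cite[\S7]{VdB100}, decomposing the relations $d^2=0$ component by component through $\Sym^2(\wedge^2 Q)$ to force $(\phi'_{\pi,\tau})$ into the scaling orbit of the system produced by Proposition~\ref{ref-3.2-14}, whence both exactness and the stated equivalence. The main obstacle is the combinatorial first step: inverting $\tau_r$ uses the hypothesis $\chi_{r/2}\ge r/2-1$ in a genuine way, to guarantee that each border-strip removal in the reduction of $(\chi|\mu)$ to $\chi$ stays confined to rows strictly below the $r/2$-th; once this is in place, the representation-theoretic and homological parts are essentially mechanical.
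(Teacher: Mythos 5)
Your identification of the summands and your degree argument showing that the differential is forced into $S^\tau Q\otimes_k\wedge^2 Q$ and hence, by equivariance and the Pieri rule, lies in a one-dimensional $\Hom$-space for each pair $\tau\subset_2\pi$, are correct and consistent with what the paper does. But there is a genuine gap at the next step: a one-dimensional $\Hom$-space contains the zero map, so your argument only shows that each component of the differential is \emph{some scalar multiple} (possibly zero) of a Pieri map, whereas the proposition asserts that every component with $\tau\subset_2\pi$ is a genuine (non-zero) Pieri map. Your phrase ``determined up to a non-zero scalar'' silently assumes exactly what has to be proved. The paper devotes the Sublemma and Lemma \ref{ref-A.2-56} to this point: realizing the differential as induced from the Koszul differential of $\wedge^\bullet(\wedge^2\Sscr)\subset\wedge^\bullet(\Oscr_G\otimes\wedge^2 Q)$ and then checking non-vanishing by an explicit highest-weight-vector computation ($u_\alpha=\bigwedge v_{ij}$ inside $\wedge^k(\wedge^2 R)$, following \cite[Prop.\ 2.3.9]{WeymanBook}). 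Nothing in your proposal substitutes for this computation, and without it both the existence statement and the uniqueness statement (which quantifies over systems of \emph{non-zero} maps) are not established.

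The uniqueness part also needs strengthening. Your chain-isomorphism argument is fine for two systems already known to resolve $N_Q(\chi)$, but the proposition is about an \emph{arbitrary} choice of Pieri maps with $d^2=0$, for which one does not yet know exactness or even what $H_0$ is; so the real content is precisely the ``rigidity argument'' you defer to. The paper carries this out: it shows via Frobenius coordinates that for fixed $\sigma\subset_2\tau\subset_2\pi$ the set of intermediate $\tau$ has at most two elements, so the relations $d^2=0$ become square relations on the ratios $\phi'_{\pi,\tau}/\phi_{\pi,\tau}$, which are then a $1$-cocycle on a contractible cubical complex with vertex set $\bigcup_k S_{\chi,k}$ and hence a coboundary $c_\tau/c_\pi$. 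Your proposal names the right reference (\cite[\S7]{VdB100}) but does not identify either of the two combinatorial inputs (the two-element bound on intermediaries, and contractibility) that make the argument go through, so as written this step is a citation rather than a proof.
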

\begin{proof}
We will first discuss uniqueness up to scalar multiplication of maps in the resolutions.  The condition that \eqref{ref-A.1-51} forms a complex may be expressed as follows.
For  $\pi\in S_{\chi,k}$, $\sigma\in S_{\chi,k-2}$ put
\begin{equation}
\label{ref-A.2-52}
\{(\tau_i)_i\in I\}:=\{\tau\in S_{\chi,k-1}\mid \sigma\subset_2 \tau\subset_2 \pi\}\,.
\end{equation}
Then \eqref{ref-A.1-51} forms a complex if and only if the compositions
\begin{equation}
\label{ref-A.3-53}
S^\pi Q\xrightarrow{(\phi_{\pi,\tau_i})_i} \bigoplus_i \wedge^2 Q\otimes S^{\tau_i} Q 
\xrightarrow{(1\otimes\phi_{\tau_i,\sigma})_i} \wedge^2 Q\otimes \wedge^2 Q\otimes S^{\sigma} Q\r S^2(\wedge^2 Q )\otimes S^{\sigma} Q 
\end{equation}
are zero. 
We must show that
any two Pieri-systems satisfying \eqref{ref-A.3-53} are equivalent.

Let $\alpha\in {Q_{-1}}(2k)$, $\beta\in {Q_{-1}}(2(k-1))$.
We may express the relation $\beta\subset_2 \alpha$ in
  terms of Frobenius coordinates. If $\alpha=(a_1,\ldots,a_u{;}a_1+1,\ldots,a_u+1)$
and $\beta=(b_1,\ldots,b_v{;}b_1+1,\ldots,b_v+1)$ then $\beta\subset_2\alpha$ if and only if $u=v$ and
$(a_1,\ldots,a_u)=(b_1,\ldots,b_t+1,\ldots,b_v)$ for some $t$, or else $u=v+1$ and $(a_1,\ldots,a_{u})=(b_1,\ldots,b_v,1)$.
From this it follows in particular that \eqref{ref-A.2-52} contains at most two elements. 

Like in the proof of \cite[Prop.\ 7.1(iv)]{VdB100} we can now build a contractible cubical complex $\PP$ with vertices $\cup_k S_{\chi,k}$
and edges the pairs $\tau\subset_2\pi$ such that if $\phi_{\pi,\tau}$, $\phi'_{\pi,\tau}$ are two Pieri-systems satisfying 
\eqref{ref-A.1-51} then $\phi'_{\pi,\tau}/\phi_{\pi,\tau}$ is a 1-cocycle for $\PP$. Since $\PP$ is contractible this 1-cocycle is a coboundary
which turns out to express exactly that $\phi'_{\pi,\tau}$ and $\phi_{\pi,\tau}$ are equivalent. 

\medskip

We now discuss the existence of $P_\bullet$. To this end we introduce some notation. 
Let $G$ be the Grassmannian of $r/2$ dimensional quotients of $Q$ and let 
 $\Pscr$, $\Sscr$ be respectively the universal quotient and
subbundle on~$G$. The resolution of ${N_Q}(\chi)$ constructed in \cite[Lemma 3.11, Lemma 3.12, Prop.\ 3.13]{WeymanSam}
(denoted by $M_\chi$ in loc.\ cit.) using the ``geometric method'' is now obtained by applying $\Gamma(G,-\otimes_G S^\chi\Pscr)$ to the Koszul complex
\[
\wedge^\bullet (\wedge^2 \Sscr) \otimes_k \Sym(\wedge^2 Q)
\]
obtained from the inclusion $\wedge^2\Sscr\subset \Oscr_G\otimes_k \wedge^2 Q$. So the resulting complex is 
\begin{equation}
\label{ref-A.4-54}
\Gamma(G,\wedge^\bullet(\wedge^2\Sscr)\otimes_G S^\chi \Pscr)\otimes_k \Sym(\wedge^2 Q)\,.
\end{equation}
Using the decomposition
\begin{equation}
\label{ref-3.2-18}
\wedge^{k}(\wedge^2\Sscr)\cong \bigoplus_{\mu\in {Q_{-1}}(2k)} S^\mu\Sscr\,
\end{equation}
we obtain from Lemma \ref{ref-A.2-56} below that the differential in \eqref{ref-A.4-54} is 
given by the composition
\begin{multline}
\label{ref-A.5-55}
\Gamma(G,S^\chi  \Pscr\otimes_G S^\alpha \Sscr)
\xrightarrow{\phi_{\alpha,\beta,\Sscr}}
\Gamma(G,S^\chi  \Pscr\otimes_G \wedge^2\Sscr \otimes_G S^\beta \Sscr)
\hookrightarrow\\
\Gamma(G,S^\chi  \Pscr\otimes_G (\wedge^2 Q\otimes_k S^\beta \Sscr))
=\Gamma(G,S^\chi  \Pscr\otimes_G S^\beta \Sscr) \otimes_k \wedge^2Q
\end{multline}
where $\phi_{\alpha,\beta,\Sscr}$ is a Pieri map. Now for each pair $(\chi,\alpha)\in S_{\chi,k}$ 
choose an isomorphism $\Gamma(G,S^\chi \Pscr\otimes_G S^\alpha \Sscr)\cong S^{(\chi|\alpha)} Q$.
Then \eqref{ref-A.5-55} becomes a $\GL(Q)$-equivariant morphism
\[
\phi_{\chi,\alpha,\beta}:S^{(\chi|\alpha)}Q
\r
S^{(\chi|\beta)}Q
 \otimes_k \wedge^2 Q\,.
\]
\begin{sublemma} If $\beta\subset_2\alpha$ then $\phi_{\chi,\alpha,\beta}$
is not zero and hence it is a Pieri map.
\end{sublemma}
\begin{proof}
In \eqref{ref-A.5-55} $\phi_{\alpha,\beta,\Sscr}$ is a monomorphism. So
it induces a monomorphism on global sections. The compositions of two monomorphisms is again
monomorphism. This can only be zero if its source is zero, which is not the case since $(\chi|\alpha)\in S_{\chi,k}$ is dominant.
\end{proof}
It follows that \eqref{ref-A.4-54} becomes a complex of the shape asserted in the statement of the proposition, finishing
the proof.
\end{proof}
A version for vector bundles of the following lemma was used.
\begin{lemma}
\label{ref-A.2-56} Let $R$ be a vector space of dimension $n$. Let $\alpha\in {Q_{-1}}(2k)$, $\beta\in {Q_{-1}}(2(k-1))$ with $\beta\subset_2\alpha$
and $l(\alpha)\le n$. Then following  composition is non-zero
\[
\phi_{\alpha,\beta}:S^\alpha R\hookrightarrow  \wedge^k(\wedge^2 R)\xrightarrow{\phi} \wedge^2R \otimes_k \wedge^{k-1} (\wedge^2 R) \twoheadrightarrow \wedge^2 R\otimes_l S^\beta R
\]
where the first and last map are obtained from the $\GL(R)$-equivariant decomposition $\wedge^k(\wedge^2 R)\cong\bigoplus_{\alpha\in {Q_{-1}}(2k)} S^\alpha R$,
$\wedge^{k-1}(\wedge^2 R)\cong\bigoplus_{\beta\in {Q_{-1}}(2(k-1))} S^\beta R$ and the middle map is the canonical one. 
\end{lemma}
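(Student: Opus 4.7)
The plan is to exploit that the middle map $\phi$ of the composition is injective, and then to isolate the $\beta$-component by an explicit highest-weight computation. First, note that $\phi:\wedge^k(\wedge^2 R)\to \wedge^2 R\otimes \wedge^{k-1}(\wedge^2 R)$ is the exterior comultiplication, whose composition with the exterior multiplication $\wedge^2 R\otimes \wedge^{k-1}(\wedge^2 R)\to \wedge^k(\wedge^2 R)$ equals $k\cdot\mathrm{id}$. In characteristic zero this forces $\phi$ to be injective, so $\phi(S^\alpha R)$ is a nonzero copy of $S^\alpha R$ inside $\wedge^2 R\otimes \wedge^{k-1}(\wedge^2 R)$.

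By Pieri's rule, $\mathrm{Hom}_{\GL(R)}(S^\alpha R,\wedge^2 R\otimes S^\beta R)$ is one-dimensional precisely when $\alpha/\beta$ is a vertical strip of size two, and for $\alpha\in Q_{-1}(2k)$, $\beta\in Q_{-1}(2(k-1))$ with $l(\beta)\le n$ this is equivalent to $\beta\subset_2\alpha$ (using the Frobenius-coordinate description worked out in the proof of Proposition \ref{prop:uniqueness}). Hence $\phi(S^\alpha R)$ sits diagonally inside $\bigoplus_{\beta\subset_2\alpha}\wedge^2 R\otimes S^\beta R$, and the content of the lemma is that each projection $\phi_{\alpha,\beta}$ of this diagonal embedding is nontrivial.

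To verify non-vanishing of each projection I would compute on an explicit highest weight vector. Writing $\alpha$ in Frobenius coordinates $(a_1,\ldots,a_u;a_1+1,\ldots,a_u+1)$, a highest weight vector $v_\alpha\in S^\alpha R\subset \wedge^k(\wedge^2 R)$ can be built as a wedge $w_1\wedge\cdots\wedge w_u$, where each $w_i\in\wedge^{2a_i}(\wedge^2 R)$ is a product of elements $x_p\wedge x_q$ whose indices $(p,q)$ trace out the $i$-th hook of $\alpha$ (so that $v_\alpha$ has weight $\alpha$ and is annihilated by all raising operators; multiplicity one of the $\alpha$-weight space guarantees this is the desired vector up to scalar). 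Expanding $\phi(v_\alpha)$ and keeping only those terms whose $\wedge^{k-1}$-factor carries the weight $\beta$, a specific surviving term maps, after projection, to a highest weight vector of $S^\alpha R\subset \wedge^2 R\otimes S^\beta R$ and is manifestly nonzero. This identification has to be done separately for the two possibilities of $\beta\subset_2\alpha$ (same $u$ with one $a_t$ decreased to $a_t-1$, or $u$ decreased to $u-1$ by removing a final unit hook when $a_u=1$), and in each case the Frobenius combinatorics identifies the surviving term explicitly. The main obstacle is the bookkeeping needed to pin down $v_\alpha$ and to keep track of which term of $\phi(v_\alpha)$ projects nontrivially in each Frobenius case; an alternative, which appears to require at least as much combinatorial work, would be to deduce non-vanishing from the exactness of the Koszul complex $\wedge^\bullet(\wedge^2 R)\otimes \mathrm{Sym}(\wedge^2 R)$ together with the uniqueness clause of Proposition \ref{prop:uniqueness} applied to a suitable $N_Q(\chi)$.
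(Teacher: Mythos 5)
Your strategy is the same as the paper's: reduce to an explicit computation of the comultiplication on the highest weight vector of the copy of $S^\alpha R$ inside $\wedge^k(\wedge^2 R)$. The opening observations (injectivity of $\phi$ via multiplication, Pieri multiplicity one) are correct but, as you yourself note, only show that \emph{some} projection $\phi_{\alpha,\beta}$ is nonzero; all the content is in the step you defer to ``bookkeeping,'' and that is exactly where the proposal has a gap. Filtering $\phi(v_\alpha)$ by the weight of the $\wedge^{k-1}$-factor leaves (at most) one term, namely $\pm(e_i\wedge e_j)\otimes \hat u_{\alpha,ij}$ where $\alpha-\beta=\epsilon_i+\epsilon_j$; but it is not ``manifest'' that this term survives the projection $\pr_\beta$ onto the summand $S^\beta R$, because the weight-$\beta$ space of $\wedge^{k-1}(\wedge^2 R)$ is in general strictly larger than the highest weight line of $S^\beta R$, so a weight-$\beta$ vector can very well have zero component in $S^\beta R$. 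The paper closes this gap by using Weyman's explicit highest weight vector $u_\alpha=\bigwedge_{i<j\le i+a_i}e_i\wedge e_j$ (the index set being the boxes of $\alpha$ strictly below the diagonal) and checking that for $\beta\subset_2\alpha$ the relevant $\hat u_{\alpha,ij}$ is \emph{literally} $\pm u_\beta$, i.e.\ the highest weight vector of the $S^\beta R$-summand itself, whence $\pr_\beta(\hat u_{\alpha,ij})=\pm u_\beta\neq 0$; the linear independence of the factors $e_i\wedge e_j$ in the first tensor slot then rules out any cancellation among the remaining terms. So to complete your argument you must (a) pin down $u_\alpha$ precisely --- note that your $w_i$ should lie in $\wedge^{a_i}(\wedge^2 R)$, not $\wedge^{2a_i}(\wedge^2 R)$, since the $i$-th hook contributes $b_i-1=a_i$ boxes below the diagonal --- and (b) verify the identity $\hat u_{\alpha,ij}=\pm u_\beta$ in both Frobenius cases of $\beta\subset_2\alpha$; this is short once $u_\alpha$ is written down, but it is the whole proof rather than routine bookkeeping.
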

\begin{proof}
Choose a basis $\{e_1,\ldots,e_n\}$ for $R$ and let $U$ be
  the unipotent subgroup of $\GL(R)$ given by upper triangular
  matrices with 1's on the diagonal, written in the basis
  $\{e_1,\ldots,e_n\}$. In other words $u\in U$ if and only if $u\cdot e_i=e_i+\sum_{j<i} \lambda_j e_j$
for $i=1,\ldots,r$.

The $U$-invariant vectors in $\wedge^k(\wedge^2 R)$ corresponding to
the decomposition  
\begin{equation}
\label{ref-A.6-57}
\wedge^k(\wedge^2 R)\cong\bigoplus_{\alpha\in {Q_{-1}}(2k)} S^\alpha R
\end{equation}
 were explicitly written down in
\cite[Prop. 2.3.9]{WeymanBook}. To explain this let $\alpha\in {Q_{-1}}(2k)$ and write it in
Frobenius coordinates as $(a_1,\ldots,a_u{;}a_1+1,\ldots,a_u+1)$. Then the highest weight
vector of the $S^\alpha R$-component in \eqref{ref-A.6-57} is given by
$u_\alpha:= \bigwedge_{i< j\le i+a_i} v_{ij}$ for $v_{ij}=e_i\wedge
e_j$ (we do not care about the sign of $u_\alpha$ so the ordering of the product is unimportant). If we represent $\alpha$
by a Young diagram then the index set of the exterior product
corresponds to the boxes strictly below the diagonal which makes it easy
to visualize why $u_\alpha$ is $U$-invariant and why it has weight $\alpha$
for the maximal torus corresponding of the diagonal matrices in $\GL(R)$.

We have $\phi(u_\alpha)=\sum_{ij}\pm v_{ij}\otimes \hat{u}_{\alpha,ij}$
where $\hat{u}_{\alpha,ij}$ is obtained from $u_{\alpha}$ by removing
the factor $v_{ij}$. Thus $\phi_{\alpha,\beta}(u_\alpha)
=\sum_{ij}\pm v_{ij}\otimes \pr_\beta(\hat{u}_{\alpha,ij})$ where $\pr_\beta:\wedge^{k-1}(\wedge^2 Q)
\r S^\beta R$ is the projection. Since the $v_{ij}$ are linearly independent in $\wedge^2 R$ it follows that
$\phi_{\alpha,\beta}(u_\alpha)$ can only be zero if $\pr_\beta(\hat{u}_{\alpha,ij})$ is zero
for all $i,j$. Now if $\beta\subset_2\alpha$ then there exist $i,j$ such that
$\hat{u}_{\alpha,ij}=\pm u_{\beta}$. Since by definition $\pr_\beta(u_\beta)=u_\beta\neq 0$
we obtain $\pr_\beta(\hat{u}_{\alpha,ij})\neq 0$ and thus also $\phi_{\alpha,\beta}(u_\alpha)\neq 0$.
\end{proof}

\def\cprime{$'$} \def\cprime{$'$} \def\cprime{$'$}
\providecommand{\bysame}{\leavevmode\hbox to3em{\hrulefill}\thinspace}
\providecommand{\MR}{\relax\ifhmode\unskip\space\fi MR }
\providecommand{\MRhref}[2]{%
  \href{http://www.ams.org/mathscinet-getitem?mr=#1}{#2}
}
\providecommand{\href}[2]{#2}

\end{document}